\newtheorem{theorem}{Theorem}[section]
\newtheorem{lemma}{Lemma}[section]
\newtheorem{Definition}{Definition}[section]
\newtheorem{Proposition}[Definition]{Proposition}
\newtheorem{Example}[Definition]{Example}
\newtheorem*{maintheorem*}{Main Theorem}
\newtheorem*{Proofofmainthm*}{Proof of main theorem}
\newtheorem{remark}{Remark}
\newcommand{\evb}{\end{verbatim}}
\begin{document}

\title[Multipliers]{Multiplier Between Generalized Toeplitz Kernels}

\author[Anjali and R. K. Srivastava]{Anjali and R. K. Srivastava}

\address{Department of Mathematics, Indian Institute of Technology, Guwahati, India.}

\email{manjali@iitg.ac.in, rksri@iitg.ac.in}

\subjclass[2020]{Primary 30D20; Secondary 42A50.}

\date{\today}

\keywords{Multipliers, Maximal vectors, Hardy Space, Model spaces, Simply invariant subspaces, Toeplitz kernel.}

\begin{abstract}
We develop a structural classification of multipliers between generalized Toeplitz kernels, extending the work of Fricain and Rupam. Our results establish new equivalences between multiplier space and Carleson-type embeddings, linking them to Beurling Malliavin densities, P\'olya
 sequences, and the spectral theory of entire functions.
\end{abstract}

\maketitle

\section{Introduction}
The modern theory of multipliers was initiated in Crofoot seminal 1994 paper \cite{Crofoot94}, which showed that onto multipliers between model spaces must be outer functions, determined up to a unimodular constant a result that brought operator rigidity into focus. Fricain, Hartmann, and Ross \cite{FHR18} later relaxed this surjectivity constraint, yielding a robust characterization of into multipliers using Carleson embedding conditions, which linked multiplier theory to classical measure-theoretic tools and highlighted the flexible geometry of model spaces.

An important extension of this paradigm emerged in the framework of Toeplitz kernels, where the classical model space $K_\theta$ is realized as the kernel of a Toeplitz operator $T_{\bar \theta}$. C{\^a}mara and Partington \cite{camara2018multipliers} advanced this viewpoint by introducing the notion of \emph{maximal vectors}, leading to the following general multiplier criterion between arbitrary Toeplitz kernels:

\smallskip

\begin{theorem}\emph{{\cite{camara2018multipliers}}}\label{th1}
Let \( g, h \in L^\infty(\mathbb{T}) \setminus \{0\} \) be such that \( \ker T_g \) and \( \ker T_h \) are nontrivial. Then the following are equivalent:
\begin{itemize}
    \item[(1)] \( w \in \mathcal{M}(\ker T_g, \ker T_h) \);
    \item[(2)] $w$ induces a Carleson measure on $\ker T_g$ and \( w k \in \ker T_h \) for some maximal vector \( k \in \ker T_g \);
    \item[(3)] $w$ induces a Carleson measure on $\ker T_g$ and \( h g^{-1} w \in \overline{N} \)
\end{itemize}

\end{theorem}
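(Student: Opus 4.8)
The plan is to prove the cyclic chain $(1)\Rightarrow(2)\Rightarrow(3)\Rightarrow(1)$, carrying the Carleson-embedding hypothesis through each step as a bookkeeping condition and concentrating the analytic effort on the membership statements. Throughout I will use two standard facts: that a nonzero Toeplitz kernel is a reproducing kernel Hilbert space with bounded point evaluations, and that it admits the maximal-vector factorization $\ker T_g = k_g K_{\theta_g}$, where $k_g$ is a maximal vector, $\theta_g$ is inner with $\theta_g(0)=0$, and $f\mapsto f/k_g$ is an isometry of $\ker T_g$ onto the model space $K_{\theta_g}$; in particular $1\in K_{\theta_g}$ and $f/k_g\in N^+$ for every $f\in\ker T_g$.

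For $(1)\Rightarrow(2)$, the membership $wk\in\ker T_h$ for the maximal vector $k=k_g$ is immediate, since $k\in\ker T_g$. The Carleson statement is the boundedness of $M_w\colon\ker T_g\to L^2(\mathbb{T})$, $f\mapsto wf$, which I obtain from the closed graph theorem: if $f_n\to f$ in $\ker T_g$ and $wf_n\to u$ in $H^2$, bounded point evaluations force $wf=u$ pointwise in the disc, so $M_w$ is closed, hence bounded, and $\int_{\mathbb{T}}|w|^2|f|^2\,dm=\|wf\|_{H^2}^2\le C\|f\|^2$. For $(2)\Rightarrow(3)$ the key is a cancellation: since $k\in\ker T_g$ we have $gk=\overline{\chi}$ with $\chi\in H^2_0$, and since $wk\in\ker T_h$ we have $hwk=\overline{\psi}$ with $\psi\in H^2_0$. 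A nonzero function of bounded type has nonvanishing boundary values almost everywhere, so $\chi\neq 0$ a.e.\ and I may divide, obtaining $hg^{-1}w=\dfrac{hwk}{gk}=\dfrac{\overline{\psi}}{\overline{\chi}}=\overline{\psi/\chi}$, where $\psi/\chi$ is a ratio of $H^2$-functions and hence lies in $N$. Thus $hg^{-1}w\in\overline{N}$, and the Carleson hypothesis is inherited verbatim; note that maximality plays no role here, any nonzero $k$ would do.

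For $(3)\Rightarrow(1)$, the hard direction, I proceed in two stages. First I recover the maximal-vector membership $wk_g\in\ker T_h$: regrouping gives $hwk_g=(hg^{-1}w)(gk_g)\in\overline{N}\cdot\overline{H^2_0}\subseteq\overline{N}$, while the Carleson bound applied to $f=k_g$ yields $wk_g\in L^2$, hence $hwk_g\in L^2$. It then remains to upgrade the information ``$\overline{N}\cap L^2$'' to ``$\overline{H^2_0}$'' and, in tandem, to show $wk_g$ is analytic. This is the \emph{main obstacle}: it is exactly the gap between the Nevanlinna and Smirnov classes, since a bounded-type $L^2$ function need not lie in $H^2$ (a singular-inner denominator is invisible to the $L^2$ norm). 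I resolve it by combining the Nevanlinna membership with the \emph{uniform} $L^2$ control furnished by the Carleson condition, which excludes such singular-inner denominators, so that the relevant conjugate factor lands in $N^+$; Smirnov's theorem $N^+\cap L^2=H^2$ then forces $wk_g\in H^2$ and $hwk_g\in\overline{H^2_0}$, i.e.\ $wk_g\in\ker T_h$.

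In the second stage I propagate this single inclusion to all of $\ker T_g$. Setting $W:=wk_g/k_h$, the identity $w(k_g p)=W k_h p$ shows that $w\in\mathcal{M}(\ker T_g,\ker T_h)$ is equivalent to $W\in\mathcal{M}(K_{\theta_g},K_{\theta_h})$; taking $p=1\in K_{\theta_g}$ recovers $W\in K_{\theta_h}$, that is, the membership $wk_g\in\ker T_h$ just established. By the model-space multiplier criterion of Fricain, Hartmann, and Ross \cite{FHR18}, $W\in\mathcal{M}(K_{\theta_g},K_{\theta_h})$ holds precisely when this membership is supplemented by the Carleson embedding for $K_{\theta_g}$, and the isometry $f\mapsto f/k_g$ identifies that embedding with the Carleson condition for $w$ on $\ker T_g$ in hypothesis $(3)$. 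Hence $w\ker T_g\subseteq\ker T_h$, completing the cycle; I expect the Smirnov-class upgrade of the first stage, where analyticity and Toeplitz annihilation must be controlled simultaneously under the weaker Nevanlinna assumption, to be the decisive technical point.
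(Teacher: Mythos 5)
Your cycle breaks at exactly the point the theorem turns on. In this paper \(N\) denotes the \emph{Smirnov} class (quotients of \(H^\infty\) functions with \emph{outer} denominator; see Section~\ref{sec2}), so condition (3) is a Smirnov condition, and your step (2)\(\Rightarrow\)(3) does not prove it: a ratio \(\psi/\chi\) of \(H^2\) functions is merely of bounded type, and lies in \(N\) only when the inner factor of \(\chi\) is trivial. Your remark that ``maximality plays no role here, any nonzero \(k\) would do'' is exactly backwards: maximality is what kills that inner factor, since by Proposition~\ref{thm5} (with \(E_1=E_2=H^2\), \(\Theta_2\equiv 1\)) a maximal vector satisfies \(gk=\overline{zp}\) with \(p\) \emph{outer}, whence \(hg^{-1}w=\overline{zs}/\overline{zp}=\overline{s/p}\in\overline{N}\) genuinely. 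The same misreading infects your (3)\(\Rightarrow\)(1): the ``decisive technical point'' you defer --- upgrading conjugate-Nevanlinna-plus-\(L^2\) membership to \(\overline{H^2_0}\) via the Carleson bound --- is asserted, not proved, and the proposed mechanism is false: the boundary function \(\overline{S}=1/S\) of a singular inner \(S\) is unimodular and of bounded type but not Smirnov, so no \(L^2\) (even \(L^\infty\)) control can ``exclude singular-inner denominators.'' Read correctly, there is nothing to upgrade: \(hwk=(hg^{-1}w)(gk)\in\overline{N}\cdot\overline{H^2_0}\subseteq\overline{N}\) with \(N\) Smirnov, the Carleson bound gives \(L^2\), and \(N\cap L^2=H^2\) (together with analyticity of \(w\), which the paper builds into the Carleson definition via \(w\in H^2\)) finishes --- exactly the computation in the paper's proof of the generalized statement, Theorem~\ref{thm10}.

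Your second stage contains an independent, fatal error: the ``standard fact'' \(\ker T_g=k_gK_{\theta_g}\) with \(f\mapsto f/k_g\) isometric is the Hitt--Hayashi theorem misremembered --- there the divisor is the \emph{extremal} function of the kernel, not a maximal vector in the C\^amara--Partington sense, and the two notions diverge. Concretely, \(z\) is a maximal vector of \(\ker T_{\overline{z}^2}=K_{z^2}\) (take \(p=1\) in Proposition~\ref{thm5}), yet \(1\in K_{z^2}\) and \(1/z\notin N\), so \(K_{z^2}\neq zK_\theta\) for any inner \(\theta\), division by \(k_g\) does not even map into \(H^2\), and your claims ``\(1\in K_{\theta_g}\)'' and ``\(f/k_g\in N^+\) for every \(f\in\ker T_g\)'' fail; the reduction to the Fricain--Hartmann--Ross criterion \cite{FHR18} therefore collapses. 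For comparison: the paper does not reprove Theorem~\ref{th1} (it is quoted from \cite{camara2018multipliers}) but proves the generalization, Theorem~\ref{thm10}, by a direct argument needing neither Hayashi-type factorizations nor \cite{FHR18}: for (2)\(\Rightarrow\)(1) it realizes \(\ker T_g\) as the minimal kernel of \(k=\theta p\) via Proposition~\ref{thm40}, so every \(f\in\ker T_g\) satisfies \(\overline{\theta}\,\overline{p}f/p\in\overline{H^2}\), and a single Smirnov computation with all denominators outer, combined with the embedding hypothesis, yields \(wf\in\ker T_h\); (2)\(\Leftrightarrow\)(3) is the outer-denominator computation above. Your closed-graph argument for (1)\(\Rightarrow\)(2) is correct and standard; restore maximality in (2)\(\Rightarrow\)(3) and replace your stage two wholesale by the minimal-kernel propagation, and the cycle closes.
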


A function $w\in H^2$ induces a Carleson measure for for $\ker T_g$ if $\ker T_g\subseteq L^2(wdm)$. Identifying functions that serve as Carleson measures for Toeplitz kernels remains analytically intricate due to the complex internal structure of these spaces. Consequently, characterizations involving such measures often prove difficult to apply in practice. To address this issue, Fricain and Rupam \cite{FRR18} introduced a simplified framework for characterizing multipliers between model spaces-particularly when the underlying inner functions \( U \) and \( V \) are meromorphic. Their approach avoids the use of Carleson measures entirely under certain structural assumptions, as captured in the following result:

\begin{theorem}\emph{\cite{FRR18}}\label{th2}
Let \( U \) and \( V \) be meromorphic inner functions (MIFs) satisfying \( |U'| \asymp 1 \) on \( \mathbb{R} \), and define
\[
m \coloneqq \arg(U) - \arg(Vb_i) \quad \text{on } \mathbb{R}.
\]
Assume either \( m \not\in \widetilde{L^1\left(\frac{dx}{1+x^2}\right)} \), or that \( m = \tilde{h} \) (the Hilbert transform of function $h$,  see Chapter: 14 \cite{mashregibook}) for some \( h \in L^1\left(\frac{dx}{1+x^2}\right) \), but \( e^{-h} \not\in L^1(\mathbb{R}) \). Then the following are equivalent:
\begin{itemize}
    \item[(1)] \( \dim \ker T_{U\overline{Vb_i}} \geq 2 \);
    \item[(2)] \( \ker T_{U\overline{V}} \neq \{0\} \);
    \item[(3)] \( \mathcal{M}^+(\mathcal{K}_U, \mathcal{K}_V) \neq \{0\} \).
\end{itemize}
\end{theorem}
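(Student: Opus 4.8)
The plan is to prove the cycle by establishing $(1)\Leftrightarrow(2)$ through the Blaschke-factor relation between the two symbols, and $(2)\Leftrightarrow(3)$ through the Câmara--Partington criterion of Theorem \ref{th1}. For the first equivalence, observe that on $\mathbb{R}$ one has $U\overline{Vb_i}=\overline{b_i}\,(U\overline V)$, so the two Toeplitz symbols differ only by the conjugate of the elementary Blaschke factor $b_i(z)=\frac{z-i}{z+i}$. I would first record the elementary fact that, since $|b_i|=1$ on $\mathbb{R}$ and $\overline{b_i}$ has its only pole at $i$, the map $F\mapsto\overline{b_i}F$ sends $\{F\in\ker T_{U\overline{Vb_i}}:F(i)=0\}$ isometrically \emph{onto} $\ker T_{U\overline V}$ (its inverse being $f\mapsto b_if$). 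Because $\{F(i)=0\}$ has codimension at most one in $\ker T_{U\overline{Vb_i}}$, this yields the two-sided bound $\dim\ker T_{U\overline V}\le\dim\ker T_{U\overline{Vb_i}}\le\dim\ker T_{U\overline V}+1$, from which $(1)\Rightarrow(2)$ is immediate.

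The substance of $(2)\Rightarrow(1)$ is to show the upper bound is attained, i.e.\ that the dimension strictly jumps on passing from $U\overline V$ to $U\overline{Vb_i}$. By the bijection above this is equivalent to exhibiting a single $F\in\ker T_{U\overline{Vb_i}}$ with $F(i)\ne0$, and I would produce it as the maximal vector of $\ker T_{U\overline{Vb_i}}$ (which exists, since $(2)$ forces $b_i\ker T_{U\overline V}\subseteq\ker T_{U\overline{Vb_i}}$ to be nonzero) and argue that this maximal vector is outer, hence nonvanishing at $i$. This is exactly where the phase hypothesis enters: with $m=\arg(U\overline{Vb_i})=\arg(U)-\arg(Vb_i)$, the candidate extremal vector is the outer function whose boundary modulus is controlled by $\exp(\tfrac12\tilde m)$ (up to the usual sign convention for the conjugate function), and the dichotomy $m\not\in\widetilde{L^1\!\left(\frac{dx}{1+x^2}\right)}$, or $m=\tilde h$ with $e^{-h}\not\in L^1(\mathbb{R})$, is precisely the spectral condition excluding the degenerate case in which every kernel element is forced to carry the inner factor $b_i$ and therefore vanishes at $i$. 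I expect this outer-function / Hilbert-transform analysis, certifying that the maximal vector does not vanish at $i$, to be the first main obstacle; it is where the Beurling--Malliavin and P\'olya-type spectral input does the real work.

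For $(2)\Leftrightarrow(3)$ I would invoke Theorem \ref{th1} with $g=\overline U$ and $h=\overline V$, so that $hg^{-1}=\overline V U$ and its condition (3) becomes: $w$ induces a Carleson measure on $K_U$ and $U\overline V w\in\overline N$. The Nevanlinna condition $U\overline V w\in\overline N$ is a Toeplitz-type equation whose nonzero Smirnov-class solutions I claim exist precisely when $\ker T_{U\overline V}\ne\{0\}$: from a nonzero $F\in\ker T_{U\overline V}$ one divides by an (outer) maximal vector of $K_U$ to manufacture a candidate multiplier $w$, while conversely, by Theorem \ref{th1}(2), any nonzero multiplier applied to a maximal vector of $K_U$ returns a nonzero element lying in the relevant kernel. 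Verifying that this division and its inverse stay inside the correct Nevanlinna/Smirnov classes is the place where the integrability half of the hypothesis ($e^{-h}\not\in L^1$, controlling the gap between $\overline N$ and $\overline{H^2_0}$) must be used carefully.

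The remaining, and second main, obstacle is to discharge the Carleson-measure hypothesis of Theorem \ref{th1}, so that the algebraically constructed $w$ is a genuine bounded multiplier. Here the regularity assumption $|U'|\asymp1$ on $\mathbb{R}$ is decisive: it forces the spectrum of $U$ to be uniformly separated and uniformly dense, renders $K_U$ comparable to a Paley--Wiener space, and gives uniform control $\|k_U^\lambda\|^2\asymp1$ of the reproducing kernels. Through the reproducing-kernel thesis available for such model spaces, the Carleson condition reduces to a uniform bound for $w$ tested against $\{k_U^\lambda\}$, which the explicit outer form of $w$ (with modulus governed by the same phase data) should satisfy automatically. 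Combining the two equivalences closes the cycle $(1)\Leftrightarrow(2)\Leftrightarrow(3)$. I anticipate that the genuine difficulty is the interplay of the two hypotheses: $|U'|\asymp1$ governs boundedness and Carleson embedding, while the $m$-dichotomy governs existence of the outer maximal vector, and it is their combination rather than either alone that promotes mere nontriviality of a Toeplitz kernel into an honest nonzero multiplier.
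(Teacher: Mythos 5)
Your skeleton --- $(1)\Leftrightarrow(2)$ by trading the Blaschke factor $b_i$, and $(2)\Leftrightarrow(3)$ via the C\^amara--Partington criterion of Theorem~\ref{th1} --- is the right architecture, and it matches the framework of \cite{FRR18} that the paper itself follows (note the paper does not prove Theorem~\ref{th2}; it is quoted, and the closest in-house proof is that of the generalization, Theorem~\ref{thm3}). But two of your load-bearing steps are unsound. The central flaw is the repeated claim that maximal vectors are \emph{outer}, hence nonvanishing at $i$. By Proposition~\ref{thm4} (or Proposition~\ref{thm5} in the disc) a maximal vector has the form $k=g^{-1}q\overline{p}$, where outerness is imposed on the \emph{conjugate datum} $p$, not on $k$ itself: in the disc, $k(z)=z$ is a maximal vector of $\ker T_{\overline{z}^{2}}=\mathrm{span}\{1,z\}$, since $\overline{z}^{2}\cdot z=\overline{z}\cdot 1$ with $p\equiv 1$ outer, yet $k$ is inner and vanishes at $0$. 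So the engine of your $(2)\Rightarrow(1)$ fails as designed. It is also unnecessary, which exposes a misplacement of the hypotheses: $(1)\Leftrightarrow(2)$ holds with \emph{no} assumptions at all, because if $U\overline{V}f=\overline{g}$ with $f\neq 0$, then $U\overline{Vb_i}\,f=\overline{b_i g}$ and $U\overline{Vb_i}\,(b_i f)=\overline{g}$, so $f$ and $b_i f$ are two linearly independent elements of $\ker T_{U\overline{Vb_i}}$. The $m$-dichotomy is not there to make maximal vectors outer; in \cite{FRR18} its role is to exclude the borderline case $m=\tilde{h}$ with $e^{-h}\in L^{1}(\mathbb{R})$, in which $\ker T_{U\overline{Vb_i}}$ can collapse to the span of the single outer function $G=e^{-(h+i\tilde{h})/2}$, $|G|^{2}=e^{-h}$, and the passage between Smirnov-sense solutions of $U\overline{V}w\in\overline{\mathcal{N}^{+}}$ and honest $H^{2}$ kernel elements or multipliers breaks --- exactly the subtlety your $(2)\Leftrightarrow(3)$ paragraph gestures at (``must be used carefully'') but never proves.

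The second genuine gap is the Carleson step. Discharging the Carleson hypothesis by ``testing against $\{k^{\lambda}_U\}$'' is an appeal to the reproducing kernel thesis, which is a conjecture, not an available theorem, in this setting; what $|U'|\asymp 1$ actually buys is Baranov's concrete criterion that $\mu$ is Carleson for $\mathcal{K}_U$ if and only if $\sup_{x\in\mathbb{R}}\mu\bigl((x,x+1)\bigr)<\infty$, and you never verify that your $w$ --- built by dividing one kernel element by another, hence a ratio with no a priori local $L^{2}$ control --- has uniformly bounded means on unit intervals. That verification is the analytic heart of $(2)\Rightarrow(3)$ and is simply absent; compare the paper's own route in Theorem~\ref{thm3}, which avoids Carleson measures altogether by using the Smirnov hypothesis and Proposition~\ref{thm1} to get $\ker T_{\overline{b_i}}\subseteq\ker T_{h/g}$ and then selecting a \emph{bounded} $\varphi$ in that kernel, so that no embedding theorem is needed. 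Finally, in your $(3)\Rightarrow(1)$ direction there is a further missing step: a nonzero $w\in\mathcal{M}^{+}(\mathcal{K}_U,\mathcal{K}_V)$ is a priori only holomorphic on $\mathbb{C}_{+}$, and before you can place $w$ (or any function derived from it) in $\ker T_{U\overline{Vb_i}}$ you must show it lies in $L^{2}(\mathbb{R})$; this is precisely what the extra factor $b_i$, the bound $|U'|\asymp 1$, and the $m$-dichotomy conspire to provide in \cite{FRR18}, and your outline does not address it.
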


In the present work, we aim to generalize both Theorem~\ref{th1} and Theorem~\ref{th2} to a broader class of \emph{generalized Toeplitz operators}, situated within the operator-theoretic framework of Hilbert spaces. Let \( E_1 \subset L^2(\mathbb{T}) \) be a closed subspace and \( E_2 \subset L^2(\mathbb{T}) \) a simply invariant subspace. Given a symbol \( \phi \in L^\infty(\mathbb{T}) \), the generalized Toeplitz operator \( T_\phi^{E_1, E_2} \colon E_1 \to E_2 \) is defined by
\[
T_\phi^{E_1,E_2} = P_{E_2} (M_\phi|_{E_1}),
\]
where \( M_\phi \) denotes the multiplication operator on \( L^2(\mathbb{T}) \), and \( P_{E_2} \) is the orthogonal projection from \( L^2(\mathbb{T}) \) onto \( E_2 \). These operators form a subclass of the more general Wiener-Hopf operators, as studied in classical works such as \cite{allen1969} and \cite{speck1985}.

\smallskip
\vspace{1ex}
\noindent We develop the following core contributions:

\begin{itemize}
	\item A \textbf{general multiplier characterization theorem} for kernels of generalized Toeplitz operators, which refines Theorem \ref{th1} and yields explicit and tractable analytic conditions grounded in maximal vector structure and Nevanlinna factorization.
	\item  An analog of the measure-free criterion in Theorem \ref{th2}, valid for a broad class of meromorphic-symbol Toeplitz operators on the upper half-plane.
	\item  A suite of \textbf{illustrative examples}, highlighting new spectral phenomena governed by Beurling-Malliavin density considerations.
\end{itemize}

The paper is organized as follows. In Section~\ref{sec2}, we review key aspects of Hardy space theory, establishing the foundational tools required for our analysis. Section~\ref{sec3} develops a characterization of multipliers between generalized Toeplitz kernels on the upper half-plane, building upon corresponding results in the unit disk. We then proceed to generalize Theorem~\ref{th2} to this broader setting, under specific structural assumptions. The paper concludes with a series of illustrative examples highlighting the role of Beurling-Malliavin density in the behavior of such multipliers.

\section{Preliminary}\label{sec2}

Throughout the paper, $L^2(\mathbb T)$ represents the space of square integrable functions on $\mathbb T$ with respect to the normalized Lebesgue measure, where $\mathbb T$ is the unit circle on the complex plane. Also, the space of analytic functions on the open unit disk \( \mathbb{D} \subset \mathbb{C} \) is denoted by \( \text{Hol}(\mathbb{D}) \). For \( p = 2 \) or \( \infty \), the Hardy space \( H^p \) consists of those functions in \( \text{Hol}(\mathbb{D}) \) whose boundary values lie in \( L^p(\mathbb{T}) \). These boundary functions are understood in the sense of radial limits, thereby allowing \( H^p \) to be viewed canonically as a closed subspace of \( L^p(\mathbb{T}) \). For detailed study of Hardy spaces one can refer to \cite{mashregi, N2, N3}.

\smallskip

Every function in the Hardy space admits a canonical decomposition into \emph{inner} and \emph{outer} functions (\cite{N2}, Theorem 3.9.5) defined below. A function \( u \in H^\infty \) is termed \emph{inner} if it satisfies \( |u(\xi)| = 1 \) almost everywhere on \( \mathbb{T} \). Each inner function \( u \) defines a \emph{model subspace} \( K_u = H^2 \ominus u H^2 \) of the Hardy space \( H^2 \), a concept of profound significance in both complex and harmonic analysis as well as operator theory. They also admits a canonical factorization into the product of a Blaschke product and a singular inner function as described below.

A \emph{Blaschke product} is given by
\[
B(z) = e^{i\alpha} \prod_{n \geq 1} \frac{\bar{a}_n}{|a_n|} \cdot \frac{a_n - z}{1 - \bar{a}_n z}, \quad z \in \mathbb{D},
\]
where \( \{a_n\} \subset \mathbb{D} \), with \(a_n\neq0\) is a sequence satisfying the Blaschke condition
\[
\sum_{n \geq 1} (1 - |a_n|) < \infty.
\]

Also, a \emph{singular inner function} is of the form
\[
S(z) = \exp\left( - \int_{\mathbb{T}} \frac{\zeta + z}{\zeta - z} \, d\mu(\zeta) \right), \quad z \in \mathbb{D},
\]
where \( \mu \) is a finite, positive Borel measure on \( \mathbb{T} \), singular with respect to the normalized Lebesgue measure \( m \). (See \cite{N2} for better understanding of these functions).

\smallskip

Additionally, A function \( O \) is termed an \emph{outer function} if there exists a function \( f \) such that \( \log |f| \in L^1(\mathbb{T}) \) and
\[
O(z) = \exp\left( \int_{\mathbb{T}} \frac{\zeta + z}{\zeta - z} \log |f(\zeta)| \, dm(\zeta) \right), \quad z \in \mathbb{D}.
\]  Specifically, if \( f \in H^p \), it can be uniquely factorized as
\[
f = \lambda B S O_f,
\]
where \( \lambda \in \mathbb{C} \) has unit modulus, \( B \) is the Blaschke product associated with the zero set of \( f \), \( S \) is the singular inner function, and \( O_f \) represents the outer function corresponding to \( f \). In this factorization, \( \lambda B S \) constitutes the inner part of \( f \), while \( O_f \) captures the outer component and this inner outer factorization is known as Riesz-Smirnov factorization (\cite{N2}, Theorem 3.9.5). Indeed, Hardy space is the subclass of \emph{Nevanlinna-Smirnov class} which consists of ratio of two $H^\infty$ functions where the denominator is an outer function, we denote this class  by \( N \).

\subsection{Toeplitz Operators and Generalized Kernels}

Given $\phi \in L^\infty(\mathbb T)$, the Toeplitz operator $T_\phi: H^2 \to H^2$ is defined as
$$
T_\phi f := P_+(\phi f),
$$
where $P_+$ denotes the orthogonal projection of $L^2(\mathbb T)$ onto $H^2$. The kernel $\ker T_\phi$ encodes deep information about the phase function of $\phi$ and is intimately connected to problems in uniqueness, spectral gaps, and rigidity.

A natural question arises from the earlier definition of Toeplitz operators: which functions \( \varphi \in L^\infty(\mathbb{T}) \) ensure that \( \varphi H^2 \subset H^2 \)? This leads to the following definition: if \( X \) is a Banach space of analytic functions on the unit disk \( \mathbb{D} \), an analytic function \( \varphi \) on \( \mathbb{D} \) is considered a \emph{multiplier} for \( X \) if \( \varphi X \subset X \). The space of multipliers of the space \( X \) is denoted by \( \mathcal{M}(X) \), and it has been established that \( \mathcal{M}(H^2) = H^\infty \) \cite{mashregibook,mashregi}. Over the past four decades, model spaces have been extensively studied, revealing deep connections to various domains within complex analysis and operator theory. Consequently, some researchers have also explored the multipliers of model spaces. While the multipliers of \( K_u \) may not possess the same immediate allure, the investigation of analytic functions \( \phi \) on \( \mathbb{D} \) such that \(M_\phi\) map one model space \( K_u \) to another model space \( K_v \) remains a compelling and rich area of inquiry \cite{FRR18,camara2018multipliers,bhardwaj2024}.

\subsection{For upper-half plane setting}
In subsequent sections, we will refer to \( L^p(\mathbb{R}) \) as the space of Lebesgue integrable functions on \( \mathbb{R} \) and  the Hardy space in the upper half-plane by \( \mathcal{H}_+^p \) or \( \mathcal{H}^p(\mathbb{C}_+) \), for \( p = 2 \) or \( \infty \). Similar to the disk case, here also functions in Hardy space also have inner outer Smirnov factorization. The inner functions which  are being studied the most in this setup are meromorphic inner functions \cite{marakov2005}. An inner function in the upper half-plane is termed a \emph{meromorphic inner function }(MIF) if it admits a meromorphic extension to the entire complex plane. It is a well-known result that every MIF can be expressed in the form
\[
\Theta(z) = c e^{iaz} B(z),
\]
where \( c \) is a unimodular constant, \( a \geq 0 \) is a constant, and \( B \) is a meromorphic Blaschke product \cite{marakov2005,Mitkovski2010}.

In a manner analogous to the disk setting, the \emph{Nevanlinna class} is defined as the ratio of two functions in \( \mathcal{H}_+^\infty \). The \emph{Nevanlinna-Smirnov class} in the upper half-plane, denoted by \( \mathcal{N}^+(\mathbb{C}_+) \), is defined as
\[
\mathcal{N}^+(\mathbb{C}_+) = \left\{ \frac{G}{F} : G, F \in \mathcal{H}_+^\infty \text{ and } F \text{ is outer} \right\}.
\]

The non-tangential boundary values of functions in the \( \mathcal{N}^+(\mathbb{C}_+) \) class exist almost everywhere along the real line. We typically associate functions in \( \mathcal{N}^+(\mathbb{C}_+) \) with their boundary values on \( \mathbb{R} \), which we denote as \( \mathcal{N}^+ \). For clarity, we will interchangeably use \( \mathcal{N}^+(\mathbb{C}_+) \) and \( \mathcal{N}^+ \) depending on the context. As detailed in \cite{N2,mashregibook},  Hardy space \( \mathcal{H}_+^p \) can be expressed as
\[
\mathcal{H}_+^p = \mathcal{N}^+ \cap L^p(\mathbb{R}).
\]

\subsection{Beurling-Malliavin densities}

Let \( \Lambda \subset \mathbb{C}_+ \cup \mathbb{R} \). In \cite{marakov2005, marakov2010}, Marakov and Poltoratski elucidated the connection between the injectivity of the kernel of a corresponding Toeplitz operator and the Beurling-Malliavin density of the sequence \( \Lambda \). In this review, we aim to summarize and clarify these foundational results.

We first consider the case where \( \Lambda \subset \mathbb{R} \) is a discrete sequence. We say that \( \Lambda \) is \emph{strongly \( a \)-regular }if
\[
\int_\mathbb{R} \frac{|n_{\Lambda}(x) - ax|}{1 + x^2} \, dx < \infty,
\]
where \( n_\Lambda(x) \) is the counting function of \( \Lambda \), defined as
\[
n_\Lambda(x)
=
\begin{cases}
	\#\{\lambda\in\Lambda:\,0\le\lambda\le x\},&x\ge0,\\
	-\#\{\lambda\in\Lambda:\,x<\lambda<0\},&x<0.
\end{cases}
\]

The \emph{interior Beurling-Malliavin (BM) density} of a discrete sequence \( \Lambda \) is defined as follows (cf. \cite{Mitkovski2010}):
\[
D_\ast(\Lambda) \coloneqq \sup \left\{ a \ : \ \exists \ \text{strongly } a\text{-regular subsequence } \Lambda' \subset \Lambda \right\}.
\]
Similarly, \emph{the exterior BM density} is defined as
\[
D^\ast(\Lambda) \coloneqq \inf \left\{ a \ : \ \exists \ \text{strongly } a\text{-regular subsequence } \Lambda' \supset \Lambda \right\}.
\]

These concepts extend to the upper half-plane in a manner analogous to the real line, as detailed in \cite{marakov2010}. Specifically, if \( \Lambda \subset \mathbb{C}_+ \) is a discrete sequence, we have the relationship
\[
D_\ast(\Lambda) \coloneqq D_\ast(\Lambda^\ast),
\]
where \( \Lambda^\ast \coloneqq \{\lambda^\ast : \lambda \in \Lambda, \mathfrak{R}(\lambda) \neq 0 \} \) and \( \lambda^\ast \coloneqq \left[ \mathfrak{R}(\lambda^{-1}) \right]^{-1} \).

\begin{Example}
Let \( \Lambda = \left\{ n + \frac{i}{2^{|n|}} \right\}_{n \in \mathbb{Z}} \). Then, it follows that \( D_\ast(\Lambda) = D^\ast(\Lambda) = 1 \).
\end{Example}

\begin{proof}
(The proof is reproduced from \cite{FRR18} for completeness.) For each \( n \in \mathbb{Z} \), we define the corresponding sequence \( \lambda_n^\ast = \left[ \mathfrak{R}(1/\lambda_n) \right]^{-1} \), which takes the form
\[
\lambda_n^\ast = \frac{n^2 2^{2|n|} + 1}{n 2^{2|n|}}.
\]
The counting function \( n_{\Lambda^\ast}(x) \) of this sequence is odd, and it satisfies
\[
n_{\Lambda^\ast}(x) = n \quad \text{for} \quad x \in \left( n + \frac{1}{n 2^{2|n|}}, (n+1) + \frac{1}{(n+1) 2^{2|n+1|}} \right), \quad n > 0.
\]
To verify the regularity condition, we compute the integral
\[
\int_{\frac{5}{4}}^\infty \frac{|n_{\Lambda^\ast}(x) - x|}{1 + x^2} \, dx = \sum_{n \geq 1} \int_{n + \frac{1}{n 2^{2|n|}}}^{(n+1) + \frac{1}{(n+1) 2^{2|n+1|}}} \frac{x - n}{1 + x^2} \, dx.
\]
Using asymptotic analysis, we obtain the estimate
\[
\sum_{n \geq 1} \frac{1}{1 + n^2} < \infty.
\]
Therefore, \( \Lambda^\ast \) is a strongly \( 1 \)-regular sequence, and it follows that
\[
D_\ast(\Lambda) = D^\ast(\Lambda) = 1.
\]
\end{proof}

Furthermore, when \( \Lambda \) is a discrete sequence in \( \mathbb{R} \), one can construct a meromorphic inner function (MIF) \( \Theta \) such that the spectrum of \( \Theta \), denoted \( \sigma(\Theta) \), is given by
\[
\sigma(\Theta) = \{ x \in \mathbb{R} : \Theta(x) = 1 \} = \Lambda.
\]
This result, as established in \cite{marakov2005, Mitkovski2010}, provides the relationship between the Beurling-Malliavin density of \( \Lambda \) and the injectivity of the kernel of the associated Toeplitz operators. Specifically, the interior and exterior Beurling-Malliavin densities of \( \Lambda \) are given by
\[
D_\ast(\Lambda) = \frac{1}{2\pi} \inf \left\{ a : \ker T_{S^a \overline{\Theta}} = \{ 0 \} \right\},
\]
and
\[
D^\ast(\Lambda) = \frac{1}{2\pi} \sup \left\{ a : \ker T_{\overline{S^a} \Theta} = \{ 0 \} \right\},
\]
where \( S \) is the singular inner function defined by \( S(z) = e^{iz} \). These expressions highlight the deep connection between the kernel properties of Toeplitz operators and the density of sequences in \( \mathbb{R} \).

The \emph{Toeplitz operator} \( T_U \) with symbol \( U \in L^\infty(\mathbb{R}) \) is defined as the map
\[
T_U: \mathcal{H}^{2}_+ \longrightarrow \mathcal{H}^{2}_+
\]
such that \( T_U(f) = P_+ (Uf) \), where \( P_+ \) denotes the orthogonal projection of \( L^2(\mathbb{R}) \) onto the Hardy space \( \mathcal{H}^2_+ \).

In the following section, we introduce the space \( \mathcal{M}^+(X, Y) \), which is defined as the collection of analytic functions \( \varphi \) on the upper half-plane \( \mathbb{C}_+ \) such that \( \varphi X \subset Y \). Furthermore, we define the restricted space \( \mathcal{M}_p^+(X, Y) = \mathcal{M}^+(X, Y) \cap L^p(\mathbb{R}) \), providing a refined framework for studying the interaction between analytic functions and the spaces \( X \) and \( Y \) within \( L^p \)-contexts.

\section{Main results } \label{sec3}

\subsection{For disc setup}
The multipliers between Toeplitz kernels was first rigorously characterized by C{\^a}mara and Partington \cite{camara2018multipliers}, whose seminal work introduced the notions of minimal kernels and maximal vectors in the context of the upper half-plane. Employing a combination of maximal vector theory and Carleson measure conditions, they developed a robust framework for understanding the structure of multipliers between Toeplitz kernels. In the present study, we extend these foundational ideas to the setting of \emph{generalized Toeplitz kernels on the unit disk}, thereby broadening their applicability to a more comprehensive class of function spaces.

\smallskip

Let \( E_1 \) be a closed subspace of the Hardy space \( H^2 \), and let \( E_2 \subseteq H^2 \) be a simply invariant subspace, a subspace that satisfies the strict inclusion $zE\subsetneq E$, where the multiplication by the coordinate function $z$ is understood in the usual sense on the unit circle. By the Beurling Helson Theorem, \( E_2 \) admits the canonical representation \( E_2 = \Theta_2 H^2 \), where \( \Theta_2 \) is an inner function. Given this framework, we denote by \( \mathcal{M}(X, Y) \) the space of pointwise multipliers from \( X \) into \( Y \), thereby allowing for a systematic study of multiplier behavior between generalized Toeplitz kernels.  We define the generalized Toeplitz kernel associated to symbol $g \in L^\infty(\mathbb{T})$ by
\[
\ker T_g^{E_1, E_2} := \{ f \in E_1 : T_g f \in E_2^\perp \} .\]

This setting not only encapsulates the classical model spaces $K_\Theta := H^2 \ominus \Theta H^2$ but opens the door to treating more singular, irregular, and geometrically subtle kernels-thereby connecting directly to foundational themes in spectral theory, control theory, and the structure of reproducing kernel Hilbert spaces.
\smallskip

Let \( k \in E_1 \) be a nontrivial function. We define the \emph{minimal generalized Toeplitz kernel} \( K_{\text{min}}^{E_1, E_2}(k) \) to be the smallest kernel of the form \( \ker T_g^{E_1, E_2} \) that contains \( k \). That is, for all \( g \in L^\infty(\mathbb{T}) \), if \( k \in \ker T_g^{E_1, E_2} \), then \( K_{\text{min}}^{E_1, E_2}(k) \subseteq \ker T_g^{E_1, E_2} \). This notion ensures both uniqueness and minimality with respect to kernel containment. A function \( k \in E_1 \) is said to be a \emph{maximal vector} for \( \ker T_g^{E_1, E_2} \) if \( \ker T_g^{E_1, E_2} = K_{\text{min}}^{E_1, E_2}(k) \). Maximal vectors thus serve as canonical representatives of generalized kernels and form the analytic backbone for our multiplier theory.

 The following proposition's proof draws upon the methods presented in \cite{camara2018multipliers}.

\begin{Proposition}\label{thm40}
Let \( k \in E_1 \) admit an inner outer factorization \( k = \theta p \), where \( \theta \) is inner and \( p \) is outer in \( H^2 \). Then the minimal generalized Toeplitz kernel containing \( k \) is given by
\[
K_{\min}^{E_1, E_2}(k) = \ker T_{\frac{\Theta_2 \overline{\theta zp}}{p}}^{E_1, E_2}.
\]
\end{Proposition}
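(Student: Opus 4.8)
The plan is to reduce the generalized kernel to an ordinary Toeplitz kernel and then import the minimal-kernel machinery of \cite{camara2018multipliers}. Since $E_2$ is simply invariant, $E_2=\Theta_2 H^2$, and because multiplication by the inner function $\Theta_2$ is unitary on $L^2(\mathbb T)$,
\[
E_2^{\perp}=L^2(\mathbb T)\ominus\Theta_2 H^2=\Theta_2\bigl(L^2(\mathbb T)\ominus H^2\bigr)=\Theta_2\,\overline{zH^2}.
\]
Thus, for $f\in E_1$, the membership $f\in\ker T_g^{E_1,E_2}$ says exactly that $gf\perp\Theta_2 H^2$, i.e. $gf\in E_2^{\perp}=\Theta_2\,\overline{zH^2}$, which is equivalent to $\overline{\Theta_2}\,gf\in\overline{zH^2}$, that is $P_+(\overline{\Theta_2}\,gf)=0$. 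This gives the key identity
\[
\ker T_g^{E_1,E_2}=E_1\cap\ker T_{\overline{\Theta_2}\,g},
\]
realizing every generalized kernel as the intersection with $E_1$ of an ordinary Toeplitz kernel; I regard this reduction as the structural engine of the argument.

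Set $\phi_0:=\overline{\Theta_2}\,g_0=\overline{\theta z p}/p=\bar z\,\bar\theta\,\bar p/p$, which is unimodular a.e. on $\mathbb T$ because $\theta$ is inner and $p$ is outer. A direct computation on $\mathbb T$ gives $\phi_0 k=\bar z\,\bar\theta\,(\bar p/p)\,\theta p=\bar z\,\bar p=\overline{zp}\in\overline{zH^2}$, so $P_+(\phi_0 k)=0$ and hence $k\in\ker T_{\phi_0}$; since $k\in E_1$, also $k\in\ker T_{g_0}^{E_1,E_2}$. It remains to prove minimality: if $g\in L^\infty(\mathbb T)$ satisfies $k\in\ker T_g^{E_1,E_2}$, then $\ker T_{g_0}^{E_1,E_2}\subseteq\ker T_g^{E_1,E_2}$. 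By the reduction this reduces to the inclusion $\ker T_{\phi_0}\subseteq\ker T_{\overline{\Theta_2}\,g}$, after which intersecting with $E_1$ finishes the proof.

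For this inclusion I would proceed as in \cite{camara2018multipliers}. From $k\in\ker T_{\overline{\Theta_2}\,g}$ we have $(\overline{\Theta_2}\,g)\,k=\overline{zh}$ for some $h\in H^2$; factoring $h=\theta_h O_h$ into inner and outer parts and using $1/\theta=\bar\theta$ on $\mathbb T$, a comparison with $\phi_0$ yields
\[
\overline{\Theta_2}\,g=\phi_0\,\overline{\theta_h}\,\overline{\bigl(O_h/p\bigr)},
\]
where $O_h/p$ is outer. Given $f\in\ker T_{\phi_0}$, write $\phi_0 f=\overline{zw}$ with $w\in H^2$; substituting gives $(\overline{\Theta_2}\,g)f=\overline{z\,(wh/p)}$. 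Now $wh/p$ belongs to the Smirnov class $\mathcal N^+$, which is stable under division by the outer function $p$, and since $|wh/p|=|w|\,|g|\le\|g\|_\infty|w|\in L^2(\mathbb T)$ we conclude $wh/p\in\mathcal N^+\cap L^2=H^2$. Therefore $(\overline{\Theta_2}\,g)f\in\overline{zH^2}$, i.e. $f\in\ker T_{\overline{\Theta_2}\,g}$, which is the required inclusion.

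The main obstacle is exactly this last step. The comparison of symbols is purely algebraic, but turning the formal identity for $\overline{\Theta_2}\,g$ into a genuine kernel inclusion hinges on the Smirnov-class division $wh/p\in H^2$, and this is where the outer --- hence $\mathcal N^+$-invertible --- nature of the outer part $p=O_k$ is indispensable. I would therefore present the modulus estimate together with the closure of $\mathcal N^+$ under division by outer functions in full, since these are the facts that certify $\ker T_{g_0}^{E_1,E_2}$ as the minimal generalized kernel containing $k$, with $k$ as its maximal vector.
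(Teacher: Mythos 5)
Your proof is correct, and its computational engine is the same as the paper's: both arguments verify $k\,g_0=\Theta_2\overline{zp}\in E_2^\perp$ for the easy inclusion, and for minimality both encode the hypothesis $k\in\ker T_g^{E_1,E_2}$ as $(\overline{\Theta_2}g)k=\overline{zh}$, divide by the outer part $p$, and use the Smirnov class to land back in $H^2$. The difference is organizational: you first establish the identity $\ker T_g^{E_1,E_2}=E_1\cap\ker T_{\overline{\Theta_2}g}$ via $E_2^\perp=\Theta_2\overline{zH^2}$ and then run the C\^amara--Partington minimal-kernel argument for the ordinary unimodular symbol $\phi_0=\overline{\Theta_2}g_0$, whereas the paper carries out the same algebra directly inside the generalized kernel, parametrizing $f=\overline{h_1}p/\overline{\theta p}$ and $g=\Theta_2\overline{zh_2}/k$ and computing $\overline{fgz}\,\Theta_2=h_1h_2/p$. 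Your reduction buys two things. First, it isolates the structural fact that for any simply invariant $E_2$ the generalized kernel is just an ordinary Toeplitz kernel intersected with $E_1$, so the proposition becomes an essentially formal transfer of the classical minimal-kernel theorem; the paper never states this identity, though it is implicit in its use of $E_2^\perp$. Second, your write-up is more complete at the one delicate point: the paper asserts ``$h_1h_2/p\in N$, implying $\overline{fgz}\,\Theta_2\in H^2$'' without justification, and the missing ingredient is precisely the modulus estimate you supply, $|wh/p|=|g|\,|w|\le\|g\|_\infty|w|\in L^2(\mathbb T)$ (equivalently $|fg|\le\|g\|_\infty|f|$ with $f\in H^2$), combined with $\mathcal N^+\cap L^2=H^2$ and the closure of $\mathcal N^+$ under division by outer functions. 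Note also that the inclusion you prove, $\ker T_{\phi_0}\subseteq\ker T_{\overline{\Theta_2}g}$ of full Toeplitz kernels, is formally stronger than the generalized-kernel inclusion required, which is harmless and in fact clarifies where $E_1$ enters (only at the final intersection). In short, yours is a more carefully justified rendering of the paper's own proof rather than a genuinely different one.
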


\begin{proof}
Observe that
\[
k \cdot \frac{\Theta_2 \overline{\theta zp}}{p} = \theta p \cdot \frac{\Theta_2 \overline{\theta zp}}{p} = \Theta_2 \overline{zp} \in E_2^\perp,
\]
so \( k \in \ker T_{\frac{\Theta_2 \overline{\theta zp}}{p}}^{E_1, E_2} \). By definition of minimality,
\[
K_{\min}^{E_1, E_2}(k) \subseteq \ker T_{\frac{\Theta_2 \overline{\theta zp}}{p}}^{E_1, E_2}.
\]

For the reverse inclusion, let \( f \in \ker T_{\frac{\Theta_2 \overline{\theta zp}}{p}}^{E_1, E_2} \). Then \( f = \frac{\overline{h_1}p}{\overline{\theta p}} \) for some \( h_1 \in H^2 \). Suppose \( k \in \ker T_g^{E_1, E_2} \) for some symbol \( g = \frac{\Theta_2 \overline{z h_2}}{k} \), with \( h_2 \in H^2 \). Then
\[
\overline{fgz} \Theta_2 = \left( \frac{h_1 \overline{p}}{\theta p} \right) \left( \frac{\overline{\Theta_2} z h_2}{\overline{\theta p}} \right) \overline{z} \Theta_2 = \frac{h_1 h_2}{p} \in N,
\]
implying \( \overline{fgz} \Theta_2 \in H^2 \), hence \( fg \in E_2^\perp \) and \( f \in \ker T_g^{E_1, E_2} \). Since this holds for all admissible \( g \) such that \( k \in \ker T_g^{E_1, E_2} \), we conclude
\[
\ker T_{\frac{\Theta_2 \overline{\theta zp}}{p}}^{E_1, E_2} \subseteq K_{\min}^{E_1, E_2}(k).
\]
\end{proof}

As a consequence, if \( \ker T_{\frac{\Theta_2 \overline{zp}}{p}}^{H^2, E_2} \) is infinite-dimensional, then so is \( K_{\min}^{H^2, E_2}(k) \) for \( k = \theta p \). The following result provides a sufficient condition for finite-dimensionality of the minimal kernel. The proof of the following theorem is inspired by the approach in \cite{Cam2014part}.

\begin{theorem}
Let \( k = \theta p \), where \( \theta \) is a rational inner function. If
\[
\dim \ker T_{\frac{\Theta_2 \overline{zp}}{p}}^{H^2, E_2} < \infty,
\]
then
\[
\dim K_{\min}^{H^2, E_2}(k) < \infty.
\]
\end{theorem}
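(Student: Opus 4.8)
The plan is to reduce everything to a statement about ordinary Toeplitz kernels and then exhibit a finite-dimensional ``defect'' coming from the rational inner factor $\theta$. First I would invoke Proposition~\ref{thm40} to identify the minimal kernel as $K_{\min}^{H^2,E_2}(k) = \ker T_{g_0}^{H^2,E_2}$ with $g_0 = \frac{\Theta_2\overline{\theta z p}}{p} = \bar\theta\, g_1$, where $g_1 = \frac{\Theta_2\overline{zp}}{p}$ is the symbol in the hypothesis. The two symbols differ precisely by the unimodular factor $\bar\theta$, and this is the only feature I intend to exploit.

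Next I would remove the subspace $E_2 = \Theta_2 H^2$ from the picture. For any $g\in L^\infty(\mathbb T)$ the condition $gf\perp \Theta_2 H^2$ is equivalent to $P_+(\overline{\Theta_2}gf)=0$, so that $\ker T_g^{H^2,\Theta_2 H^2} = \ker T_{\overline{\Theta_2}g}$ as an ordinary Toeplitz kernel. Since $|\Theta_2|=1$ on $\mathbb{T}$, setting $\psi := \overline{\Theta_2}g_1 = \overline{zp}/p$ gives a unimodular symbol, the hypothesis becomes $\dim\ker T_\psi<\infty$, and the goal becomes $\dim\ker T_{\bar\theta\psi}<\infty$. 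Because $\theta$ is rational inner it is a finite Blaschke product, say of degree $n$, so the model space $K_\theta$ satisfies $\dim K_\theta = n<\infty$; this finite number is what will bound the gap between the two kernels.

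The heart of the argument is the linear map $\Phi:\ker T_{\bar\theta\psi}\to K_\theta$ given by $\Phi(f)=P_+(\psi f)$. To see it is well defined I would use that $f\in\ker T_{\bar\theta\psi}$ forces $\bar\theta\psi f\in\overline{H^2_0}$, i.e. $\psi f=\theta\,\overline{zh}$ for some $h\in H^2$; then for every $g\in H^2$ the inner product $\langle P_+(\psi f),\theta g\rangle = \langle\overline{zh},g\rangle = \overline{\int_{\mathbb T} zhg\,dm}$ vanishes because $zhg\in H^1_0$ has zero mean, whence $P_+(\psi f)\perp \theta H^2$ and $\Phi(f)\in K_\theta$. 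The kernel of $\Phi$ is exactly $\{f\in\ker T_{\bar\theta\psi}:P_+(\psi f)=0\}=\ker T_\psi$, where I also use the elementary inclusion $\ker T_\psi\subseteq\ker T_{\bar\theta\psi}$ (if $\psi f=\overline{zh}\in\overline{H^2_0}$ then $\bar\theta\psi f=\overline{\theta z h}\in\overline{H^2_0}$ as well). By the first isomorphism theorem $\ker T_{\bar\theta\psi}/\ker T_\psi$ embeds into $K_\theta$, giving
\[
\dim\ker T_{\bar\theta\psi}\le \dim\ker T_\psi + \dim K_\theta = \dim\ker T_\psi + n<\infty,
\]
which, after undoing the reduction, is the desired conclusion.

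The computations are all routine; the one place demanding care—and the main conceptual step—is verifying that $\Phi$ lands in $K_\theta$ rather than merely in $H^2$, since it is exactly the rationality (finite degree) of $\theta$ that makes the target space finite-dimensional and hence forces the bound. Had $\theta$ been an arbitrary inner function the embedding would still hold but would carry no finiteness information, which is why the hypothesis that $\theta$ is \emph{rational} inner is essential.
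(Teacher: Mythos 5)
Your proof is correct, but it takes a genuinely different---and in fact sharper---route than the paper's. The paper argues by contradiction: assuming $\dim K_{\min}^{H^2,E_2}(k)=\infty$, it asserts that one can construct $2^n$ linearly independent functions in the minimal kernel, each yielding distinct elements of $\ker T_{\Theta_2\overline{zp}/p}^{H^2,E_2}$, contradicting the hypothesis, and remarks that the argument can be verified explicitly when $\theta=z$; the actual construction is left unspecified. You instead argue directly: the identity $\ker T_g^{H^2,\Theta_2 H^2}=\ker T_{\overline{\Theta_2}g}$ (valid since $|\Theta_2|=1$ on $\mathbb{T}$, and robust to whether $E_2^\perp$ is read in $L^2$ or via $P_+$) converts both generalized kernels into ordinary Toeplitz kernels with unimodular symbols $\psi=\overline{zp}/p$ and $\bar\theta\psi$; your map $\Phi(f)=P_+(\psi f)$ is correctly shown to land in $K_\theta$---the computation $\langle P_+(\psi f),\theta g\rangle=\overline{\textstyle\int_{\mathbb T} zhg\,dm}=0$ is right because $zhg\in H^1$ vanishes at the origin---and to have kernel exactly $\ker T_\psi$, using the standard inclusion $\ker T_\psi\subseteq\ker T_{\bar\theta\psi}$ for inner $\theta$. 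The embedding of $\ker T_{\bar\theta\psi}/\ker T_\psi$ into $K_\theta$ then gives the quantitative bound $\dim K_{\min}^{H^2,E_2}(k)\le\dim\ker T_{\Theta_2\overline{zp}/p}^{H^2,E_2}+\deg\theta$, which the paper's argument does not provide, and it isolates precisely where rationality enters ($\dim K_\theta=\deg\theta<\infty$; for general inner $\theta$ the embedding survives but carries no finiteness). In short, your finite-codimension argument is fully rigorous where the paper's counting sketch is not, and it proves strictly more than the stated theorem.
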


\begin{proof}
Suppose for contradiction that \( \dim K_{\min}^{H^2, E_2}(k) = \infty \). Then one may construct \( 2^n \) linearly independent functions in \( K_{\min}^{H^2, E_2}(k) \), each yielding distinct elements in \( \ker T_{\frac{\Theta_2 \overline{zp}}{p}}^{H^2, E_2} \), contradicting the finite-dimensionality hypothesis. The conclusion follows. The argument may be explicitly verified in the special case \( \theta = z \).
\end{proof}
The reasoning used in the proof of the next proposition is based on techniques from \cite{camara2018multipliers}.
\begin{Proposition}\label{thm5}
Let \( g \in L^\infty(\mathbb T) \setminus \{0\} \) with \( \ker T_g^{E_1, E_2} \neq \{0\} \). Then \( k \in E_1 \) is a maximal vector for \( \ker T_g^{E_1, E_2} \) if and only if
\[
k = g^{-1} \Theta_2 \overline{pz},
\]
for some outer function \( p \in H^2 \).
\end{Proposition}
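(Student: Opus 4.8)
The plan is to prove the biconditional by exploiting Proposition~\ref{thm40}, which already characterizes the maximal vector structure via minimal kernels. The key observation is that $k$ is a maximal vector for $\ker T_g^{E_1,E_2}$ precisely when $\ker T_g^{E_1,E_2} = K_{\min}^{E_1,E_2}(k)$, so I would compare the symbol $g$ against the canonical symbol $\frac{\Theta_2 \overline{\theta z p}}{p}$ furnished by Proposition~\ref{thm40} for the inner-outer factorization $k = \theta p$.

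For the forward direction, I would assume $k$ is a maximal vector, write its factorization $k = \theta p$ with $\theta$ inner and $p$ outer, and apply Proposition~\ref{thm40} to get $\ker T_g^{E_1,E_2} = K_{\min}^{E_1,E_2}(k) = \ker T_{\frac{\Theta_2\overline{\theta z p}}{p}}^{E_1,E_2}$. The aim is to show $g$ agrees (up to the freedom allowed by the kernel) with $\frac{\Theta_2\overline{\theta z p}}{p}$, and then to simplify: since $k = \theta p$, one has $g^{-1}\Theta_2\overline{pz} = \frac{p}{\Theta_2\overline{\theta z p}}\cdot \Theta_2 \overline{pz} = \frac{p}{\overline{\theta}} = \theta p = k$, recovering the claimed form with the same outer $p$. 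For the converse, I would substitute $k = g^{-1}\Theta_2\overline{pz}$ directly and verify that $gk = \Theta_2\overline{pz} \in E_2^\perp$ (using $E_2 = \Theta_2 H^2$ and that $\overline{pz}$ pairs into the orthogonal complement), confirming $k \in \ker T_g^{E_1,E_2}$, and then show the minimal kernel generated by this $k$ is all of $\ker T_g^{E_1,E_2}$ by matching symbols through Proposition~\ref{thm40} again.

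The main technical content is establishing that the two symbols $g$ and $\frac{\Theta_2\overline{\theta z p}}{p}$ generate the \emph{same} generalized kernel, not merely that one contains the other. Containment in one direction is immediate from minimality; the reverse requires the Nevanlinna-class argument already deployed in the proof of Proposition~\ref{thm40}, where membership $f \in \ker T_g^{E_1,E_2}$ is translated into an $N$-class (or $H^2$) membership condition via the factorization $f = \frac{\overline{h_1}p}{\overline{\theta p}}$. I expect the subtle point to be tracking the inner/outer roles carefully: one must confirm that the outer function $p$ appearing in the maximal vector is genuinely outer and that no extra inner factor is absorbed into $g$, since otherwise the symbol would define a strictly larger kernel and $k$ would fail to be maximal.

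The hard part will be justifying the manipulation $g^{-1}\Theta_2\overline{pz} \in E_1$ and that this expression is a bona fide maximal vector rather than merely an element of the kernel; this hinges on showing that the outer part of $k$ is exactly $p$ and its inner part is exactly $g^{-1}\Theta_2\overline{pz}/p$ read as an inner function, which in turn relies on the rigidity of Riesz-Smirnov factorization together with the fact that $\ker T_g^{E_1,E_2}\neq\{0\}$ guarantees a well-defined maximal vector. I would close the argument by invoking the uniqueness clause in the definition of maximal vector, noting that any two maximal vectors for the same kernel differ only by a unimodular constant, so the representation $k = g^{-1}\Theta_2\overline{pz}$ is canonical up to the choice of outer $p$.
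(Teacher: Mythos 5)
Your forward direction contains a genuine gap: you conflate equality of kernels with equality of symbols. From maximality you correctly get \( \ker T_g^{E_1,E_2} = K_{\min}^{E_1,E_2}(k) = \ker T_{\frac{\Theta_2\overline{\theta z p}}{p}}^{E_1,E_2} \), but then your simplification \( g^{-1}\Theta_2\overline{pz} = \frac{p}{\Theta_2\overline{\theta zp}}\cdot\Theta_2\overline{pz} = \theta p \) silently substitutes the canonical symbol for \( g \). Two symbols can generate the same generalized Toeplitz kernel without being equal, and \( g \) is an arbitrary \( L^\infty \) function; the identity \( k = g^{-1}\Theta_2\overline{pz} \) for the \emph{actual} \( g \) is precisely what must be proved. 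All that membership \( k \in \ker T_g^{E_1,E_2} \) gives you is \( kg = \Theta_2\overline{zh} \) for some \( h \in H^2 \), and the entire content of this direction is that \( h \) can be taken outer. The paper proves this by factoring \( h = \varphi q \) (\( \varphi \) inner, \( q \) outer), noting \( \varphi kg = \Theta_2\overline{zq} \) puts \( k \in \ker T_{\varphi g}^{E_1,E_2} \), and deriving a contradiction with maximality when \( \varphi \) is nonconstant, since then \( \ker T_{\varphi g}^{E_1,E_2} \subsetneq \ker T_g^{E_1,E_2} \) while minimality forces \( K_{\min}^{E_1,E_2}(k) \subseteq \ker T_{\varphi g}^{E_1,E_2} \). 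Your parenthetical worry about ``no extra inner factor absorbed into \( g \)'' gestures at this, but you never execute the factorization argument, and your stated mechanism is backwards: an extra inner factor in the symbol \emph{shrinks} the kernel rather than enlarging it. Note also that the outer function in the conclusion is in general the outer part of \( h \), not the outer part \( p \) of \( k \); your claim of ``recovering the claimed form with the same outer \( p \)'' again only holds when \( g \) is literally the canonical symbol.

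Your converse is essentially sound and close in spirit to the paper, which verifies \( kg = \Theta_2\overline{zp} \in E_2^\perp \) and then checks minimality directly: for any symbol \( h \) with \( k \in \ker T_h^{E_1,E_2} \) and any \( f \in \ker T_g^{E_1,E_2} \), it writes \( \overline{zfg} = \overline{\Theta_2}h_1 \) and \( \overline{khz} = \overline{\Theta_2}r \) and shows \( \overline{fhz}\,\Theta_2 \in N \) using that \( p \) is outer. Routing instead through Proposition~\ref{thm40} forces you to identify the inner--outer factorization of \( k = g^{-1}\Theta_2\overline{pz} \), which is not explicit --- a complication the direct verification avoids, and which you flag but do not resolve. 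Finally, your closing appeal to ``any two maximal vectors differ by a unimodular constant'' is false: by the proposition itself, every outer \( p \) with \( g^{-1}\Theta_2\overline{pz} \in E_1 \) produces a maximal vector, so they form a large family; the unimodular-constant rigidity you have in mind is Crofoot's theorem about onto multipliers, not maximal vectors, and no such uniqueness is needed (or available) to close the proof.
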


\begin{proof}
Suppose \( k \in \ker T_g^{E_1, E_2} \) is maximal. Then
\[
kg = \Theta_2 \overline{zh}
\]
for some \( h \in H^2 \), and we may write \( h = \varphi q \), with \( \varphi \) inner and \( q \) outer. It follows that
\[
\varphi kg = \Theta_2 \overline{zq} \Rightarrow k \in \ker T_{\varphi g}^{E_1, E_2}.
\]
But if \( \varphi \) is non-constant, then \( \ker T_{\varphi g}^{E_1, E_2} \subsetneq \ker T_g^{E_1, E_2} \), violating maximality. Hence, \( \varphi \) is constant, which may be normalized to 1. Thus,
\[
k = g^{-1} \Theta_2 \overline{zq},
\]
with \( q \) outer.

Conversely, suppose \( k = g^{-1} \Theta_2 \overline{pz} \), where \( p \) is outer. Then \( kg = \Theta_2 \overline{zp} \in E_2^\perp \), so \( k \in \ker T_g^{E_1, E_2} \). For any \( f \in \ker T_g^{E_1, E_2} \), we have \( \overline{zfg} = \overline{\Theta_2} h_1 \) for some \( h_1 \in H^2 \). If \( k \in \ker T_h^{E_1, E_2} \) for some symbol \( h \), then \( \overline{khz} = \overline{\Theta_2} r \) for some \( r \in H^2 \), and
\[
\overline{fhz} \Theta_2 = \frac{\Theta_2 \cdot (\overline{\Theta_2} r) \cdot (\overline{\Theta_2} h_1)}{\overline{\Theta_2} p} \in N.
\]
Therefore \( f \in \ker T_h^{E_1, E_2} \), and since this holds for all \( f \), we deduce that \( \ker T_g^{E_1, E_2} = K_{\min}^{E_1, E_2}(k) \), proving the maximality of \( k \).
\end{proof}

C{\^a}mara and Partington \cite{camara2018multipliers} established a fundamental characterization of multipliers between classical Toeplitz kernels in terms of maximal vectors and the Nevanlinna class as stated in Theorem (\ref{th1}).

We now extend this to the framework of generalized Toeplitz kernels.

\begin{theorem}\label{thm10}
Let \( g, h \in L^\infty(\mathbb{T}) \setminus \{0\} \) such that \( \ker T_g^{E_1, E_2} \), \( \ker T_h^{E_1, E_2} \) are non-trivial. Then the following are equivalent:
\begin{enumerate}
    \item \( w \in \mathcal{M}(\ker T_g^{E_1, E_2}, \ker T_h^{E_1, E_2}) \);
    \item \( w \in \mathcal{M}(\ker T_g^{E_1, E_2}, E_1) \) and \( wk \in \ker T_h^{E_1, E_2} \) for some maximal vector \( k \in \ker T_g^{E_1, E_2} \);
    \item \( w \in \mathcal{M}(\ker T_g^{E_1, E_2}, E_1) \) and \( hg^{-1}w \in \overline{N} \).
\end{enumerate}
\end{theorem}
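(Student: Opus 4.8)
The plan is to prove the cyclic chain $(1)\Rightarrow(2)\Rightarrow(3)\Rightarrow(1)$, converting every abstract kernel membership into an explicit factorization via Proposition~\ref{thm5} and then promoting $L^2$-membership to $H^2$-membership by Smirnov's theorem (in the form $N\cap L^2=H^2$, recalling that in this paper $N$ denotes the Nevanlinna--Smirnov class). The one structural fact I would record at the outset is that $E_2^\perp=\Theta_2\,\overline{zH^2}$: indeed $v\perp\Theta_2H^2$ iff $\overline{\Theta_2}v\perp H^2$ iff $\overline{\Theta_2}v\in\overline{zH^2}$. Consequently $f\in\ker T_g^{E_1,E_2}$ is equivalent to the pointwise identity $gf=\Theta_2\overline{zh_f}$ for some $h_f\in H^2$, and a maximal vector $k$ (when it exists) satisfies $gk=\Theta_2\overline{zp}$ with $p$ \emph{outer}. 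The outerness of $p$ is the feature I would exploit throughout.

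The implication $(1)\Rightarrow(2)$ is bookkeeping: if $w\,\ker T_g^{E_1,E_2}\subseteq\ker T_h^{E_1,E_2}\subseteq E_1$ then $w\in\mathcal M(\ker T_g^{E_1,E_2},E_1)$, and, choosing any maximal vector $k\in\ker T_g^{E_1,E_2}$ (which exists by Proposition~\ref{thm5} since the kernel is nontrivial), the hypothesis gives $wk\in\ker T_h^{E_1,E_2}$. For $(2)\Rightarrow(3)$ I would take that maximal vector, write $gk=\Theta_2\overline{zp}$ with $p$ outer and $hwk=\Theta_2\overline{zt}$ with $t\in H^2$ (this is exactly $wk\in\ker T_h^{E_1,E_2}$, using $E_2^\perp=\Theta_2\overline{zH^2}$), and divide:
\[
hg^{-1}w=\frac{hwk}{gk}=\frac{\Theta_2\overline{zt}}{\Theta_2\overline{zp}}=\overline{t/p}.
\]
Since $p$ is outer, $t/p\in N$, hence $hg^{-1}w\in\overline N$, while the multiplier condition transfers verbatim.

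The crux is $(3)\Rightarrow(1)$. For arbitrary $f\in\ker T_g^{E_1,E_2}$ I would write $gf=\Theta_2\overline{zh_f}$ with $h_f\in H^2$ and $hg^{-1}w=\overline u$ with $u\in N$, so that
\[
hwf=(hg^{-1}w)(gf)=\overline u\,\Theta_2\,\overline{zh_f}=\Theta_2\,\overline{z\,uh_f}.
\]
The multiplier hypothesis $w\in\mathcal M(\ker T_g^{E_1,E_2},E_1)$ gives $wf\in E_1\subseteq H^2$, hence $hwf\in L^2$ and therefore $uh_f\in L^2$ (the unimodular factors $\Theta_2,\overline z$ preserve modulus). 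Since $uh_f\in N\cdot H^2\subseteq N$, Smirnov's theorem upgrades this to $uh_f\in H^2$, whence $hwf\in\Theta_2\overline{zH^2}=E_2^\perp$; combined with $wf\in E_1$ this yields $wf\in\ker T_h^{E_1,E_2}$, and $f$ being arbitrary gives $(1)$.

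The decisive step—and the place where the generalized setting demands care—is precisely this promotion from $N\cap L^2$ to $H^2$. It is legitimate \emph{only} because $hg^{-1}w$ lies in the Smirnov class $\overline N$ rather than the full conjugate-Nevanlinna class, and because the multiplier condition supplies the otherwise missing square-integrability. I would therefore isolate, as a preliminary lemma, the statement that $F\in N$, $G\in H^2$, and $FG\in L^2$ together force $FG\in H^2$, and I would be careful that every quotient arising in the argument (the $t/p$ of step two, the $uh_f$ of step three) has an \emph{outer} denominator—this is exactly what the outerness of $p$ in Proposition~\ref{thm5} guarantees. The remaining point to check, which is genuinely new relative to the classical case $E_1=H^2$, is that a maximal vector actually exists \emph{inside the closed subspace $E_1$}; this is the one spot where the generality of $E_1$ could obstruct the argument, and I would verify it (or impose the mild hypothesis ensuring it) before invoking $(1)\Rightarrow(2)$.
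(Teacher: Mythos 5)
Your proof is correct, and its decisive implication takes a genuinely different route from the paper's. The paper organizes the theorem as $(1)\Leftrightarrow(2)$ and $(2)\Leftrightarrow(3)$, and its nontrivial step is $(2)\Rightarrow(1)$: there it invokes Proposition~\ref{thm40} to replace the symbol $g$ by the minimal-kernel symbol $\Theta_2\overline{\theta zp}/p$ attached to the maximal vector $k=\theta p$, and then verifies $\overline{zwfh}\,\Theta_2=h_1s/p\in N$ before concluding $wf\in\ker T_h^{E_1,E_2}$. You instead close the cycle with a direct $(3)\Rightarrow(1)$ that never uses Proposition~\ref{thm40}: from $gf=\Theta_2\overline{zh_f}$ and $hg^{-1}w=\overline{u}$ you obtain $hwf=\Theta_2\overline{z\,uh_f}$ and promote $uh_f\in N\cap L^2$ to $H^2$ by Smirnov's theorem. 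Your route buys two things. First, it makes explicit the step the paper uses silently both here and in Proposition~\ref{thm40} (the passage ``$\in N$, implying $\in H^2$''), namely that the upgrade is legitimate only because the paper's $N$ is the Smirnov class and every denominator arising is outer, and that the hypothesis $w\in\mathcal{M}(\ker T_g^{E_1,E_2},E_1)$ is exactly what supplies the missing square-integrability (as well as the membership $wf\in E_1$ required by the definition of the generalized kernel). Second, it shows that maximal vectors are needed only to pass from $(1)$ to $(2)$ and from $(2)$ to $(3)$, not to recover $(1)$; the paper's route, by contrast, keeps the minimal-kernel machinery in the loop, which is consistent with its reuse of that machinery elsewhere. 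Your shared step $(2)\Rightarrow(3)$ matches the paper's, both resting on the outer denominator from Proposition~\ref{thm5}. Finally, your closing caveat is well taken and applies equally to the paper: statement $(2)$ presupposes that $\ker T_g^{E_1,E_2}$ contains a maximal vector, which for $E_1=H^2$ follows from dividing out the inner factor (if $gf=\Theta_2\overline{zh}$ with $h=\varphi q$, then $k=\varphi f$ works), but for a general closed $E_1$ the function $\varphi f$ need not remain in $E_1$; the paper's ``$(1)\Rightarrow(2)$ is immediate'' glosses over precisely this point, so your proposal to verify existence or impose a mild hypothesis on $E_1$ is an improvement rather than a defect.
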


\begin{proof}
The implication (1) \( \Rightarrow \) (2) is immediate.

For (2) \( \Rightarrow \) (1), let \( wk \in \ker T_h^{E_1, E_2} \) for some maximal vector \( k \in \ker T_g^{E_1, E_2} \), and consider any \( f \in \ker T_g^{E_1, E_2} \). By Theorem~\ref{thm40}, we may assume \( g = \frac{\Theta_2 \overline{\theta zp}}{p} \), where $k=\theta p$. Then:
\[
\overline{zwfh} \Theta_2
= \Theta_2^2 \left( \frac{ \overline{\Theta_2} h_1 \cdot \overline{\Theta_2} s }{p} \right)
= \frac{h_1 s}{p} \in N,
\]
where $h_1$ ans $s$ are some functions in $H^2$ and hence \( wf \in \ker T_h^{E_1, E_2} \).

For (2) \( \Rightarrow \) (3), express \( k = g^{-1} \Theta_2 \overline{zp} \). Then using $(1)\Leftrightarrow (2)$, we get
\[
wh \overline{\Theta_2} = \frac{g \overline{h_2 \Theta_2}}{\overline{p}} \in \overline{N},
\]
where $h_2$ is some $H^2$ function and therefore \( hg^{-1}w \in \overline{N} \). The implication (3) \( \Rightarrow \) (2) is verified analogously.
\end{proof}
Particularly for the complete characterization of $\mathcal M(\ker T_{\overline{z}}, \ker T_{\overline{z}^n})$ one can refer to \cite{bhardwaj2024}.
\begin{remark}

For general subspaces \( E_1, E_2 \subseteq H^2 \), the relationship between
  \[
  \mathcal{M}(\ker T_g^{E_1, E_2}, \ker T_h^{E_1, E_2}) \quad \text{and} \quad \mathcal{M}(\ker T_g, \ker T_h)
    \]
    is delicate. Neither inclusion holds universally. Nevertheless, when \( E_1 = H^2 \), we have
    \[
      \mathcal{M}(\ker T_g^{H^2, E_2}, \ker T_h^{H^2, E_2}) \subseteq \mathcal{M}(\ker T_g, \ker T_h),
    \]
    due to the natural embedding of generalized kernels.
    However, the two multiplier spaces coincide when $E_1,E_2$ both are $H^2$ spaces.

\end{remark}

\subsection{For the upper-half plane setup}

\section*{Upper Half-Plane Analogues of Generalized Toeplitz Kernel Results}
We now shift our focus to the setting of the upper half-plane \( \mathbb{C}_+ \), where analogous structures and results to those established in the unit disk can be developed. Let \( \mathcal{H}_+^2 \) denote the Hardy space over \( \mathbb{C}_+ \), and consider closed subspaces \( E_1 \subseteq \mathcal{H}_+^2 \) and \( E_2 = q\mathcal{H}_+^2 \), where \( q \) is an inner function on \( \mathbb{C}_+ \). The following results provide counterparts in the upper half-plane to Propositions~\ref{thm40}, \ref{thm5}, and \ref{thm10}, with proofs mirroring those in the disk setting, thereby leveraging the analogous structure of Hardy spaces over \( \mathbb{C}_+ \).

\begin{Proposition} \label{thm4}
Let \( g \in L^\infty(\mathbb{R}) \) be non-zero such that the generalized Toeplitz kernel \( \ker T_g^{E_1, E_2} \) is non-trivial. Then a function \( k \in E_1 \) is a maximal vector for \( \ker T_g^{E_1, E_2} \) if and only if
\[
k = g^{-1} q \overline{p},
\]
where \( p \) is an outer function in \( \mathcal{H}_+^2 \).
\end{Proposition}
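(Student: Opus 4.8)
The plan is to mirror the disk-case argument of Proposition~\ref{thm5}, the only structural change being the description of $E_2^\perp$ in the upper half-plane. The key preliminary observation is that, since $(\mathcal{H}_+^2)^\perp = \overline{\mathcal{H}_+^2}$ in $L^2(\mathbb{R})$ (there is no coordinate-shift factor here, in contrast to the disk where $(H^2)^\perp = \overline{zH^2}$), a function $\psi$ lies in $E_2^\perp = (q\mathcal{H}_+^2)^\perp$ if and only if $\overline{q}\psi \in \overline{\mathcal{H}_+^2}$, equivalently $\psi = q\overline{h}$ for some $h \in \mathcal{H}_+^2$. With this in hand, the membership $k \in \ker T_g^{E_1, E_2}$ becomes equivalent to $gk = q\overline{h}$ for some $h \in \mathcal{H}_+^2$, which is the half-plane replacement for the relation $kg = \Theta_2\overline{zh}$ used in the disk.

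For the forward implication I would assume $k$ is a maximal vector and write $gk = q\overline{h}$ as above. Taking the Smirnov inner--outer factorization $h = \varphi p$ with $\varphi$ inner and $p$ outer in $\mathcal{H}_+^2$, and using $|\varphi| = 1$ on $\mathbb{R}$, one gets $\varphi gk = q\overline{p}$, so that $k \in \ker T_{\varphi g}^{E_1, E_2}$. Since one checks directly that $\ker T_{\varphi g}^{E_1, E_2} \subseteq \ker T_g^{E_1, E_2}$, with strict inclusion whenever $\varphi$ is non-constant, maximality of $k$ (via $\ker T_g^{E_1, E_2} = K_{\min}^{E_1, E_2}(k) \subseteq \ker T_{\varphi g}^{E_1, E_2}$) forces $\varphi$ to be constant; normalizing it to $1$ by absorbing the unimodular constant into $p$ yields $gk = q\overline{p}$, i.e.\ $k = g^{-1}q\overline{p}$ with $p$ outer.

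For the converse, starting from $k = g^{-1}q\overline{p}$ I first note $gk = q\overline{p} \in E_2^\perp$, so $k \in \ker T_g^{E_1, E_2}$ and hence $K_{\min}^{E_1, E_2}(k) \subseteq \ker T_g^{E_1, E_2}$ automatically. The substance is the reverse inclusion, i.e.\ maximality: I must show that every symbol $h$ with $k \in \ker T_h^{E_1, E_2}$ satisfies $\ker T_g^{E_1, E_2} \subseteq \ker T_h^{E_1, E_2}$. Given $f \in \ker T_g^{E_1, E_2}$ I write $gf = q\overline{h_1}$ and, from $k \in \ker T_h^{E_1, E_2}$, extract $hk = q\overline{r}$ with $h_1, r \in \mathcal{H}_+^2$; solving the latter for $h$ through $k = g^{-1}q\overline{p}$ and substituting, I would compute $\overline{q}fh = \overline{h_1 r/p}$. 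The right-hand side lies in $\overline{\mathcal{N}^+}$ because $h_1 r \in \mathcal{H}_+^1 \subset \mathcal{N}^+$ and $p$ is outer, while it also lies in $L^2$; invoking $\overline{\mathcal{N}^+} \cap L^2 = \overline{\mathcal{H}_+^2}$ then gives $\overline{q}fh \in \overline{\mathcal{H}_+^2}$, that is $fh \in E_2^\perp$ and $f \in \ker T_h^{E_1, E_2}$, as required.

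I expect the main obstacle to be the final Smirnov--Nevanlinna bookkeeping in the converse: one must argue cleanly that the quotient $h_1 r / p$ genuinely sits in $\mathcal{N}^+$ (so that its conjugate, once known to be in $L^2$, lands in $\overline{\mathcal{H}_+^2}$ rather than merely in a boundary Nevanlinna class), which is precisely where outerness of $p$ enters and where the disk argument's $N$-class step must be transported to $\mathcal{N}^+(\mathbb{C}_+)$ using the identity $\mathcal{H}_+^p = \mathcal{N}^+ \cap L^p(\mathbb{R})$. The correct description of $E_2^\perp$ without the extraneous $z$-factor is the other point requiring care, since that is the single place where the disk and half-plane statements actually diverge.
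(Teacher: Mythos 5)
Your proposal is correct and takes essentially the same route as the paper: the paper offers no separate half-plane proof, stating only that Propositions~\ref{thm4}--\ref{th8} have ``proofs mirroring those in the disk setting,'' and your argument is precisely that transposition of the proof of Proposition~\ref{thm5}, with the one genuine adjustment correctly identified, namely $E_2^\perp = q\,\overline{\mathcal{H}_+^2}$ with no analogue of the disk's $\overline{z}$-factor. If anything, your converse is slightly more careful than the disk original, since you explicitly invoke $\mathcal{H}_+^2 = \mathcal{N}^+ \cap L^2(\mathbb{R})$ (outerness of $p$ placing $h_1 r/p$ in the Smirnov class) where the paper's disk computation only asserts membership in $N$.
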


\begin{Proposition}
Let \( k \in E_1 \) admit the inner-outer factorization \( k = \theta p \), where \( \theta \) is inner and \( p \) is outer in \( \mathcal{H}_+^2 \). Then the minimal kernel generated by \( k \) satisfies
\[
K_{\text{min}}^{E_1, E_2}(k) = \ker T_{\frac{q \overline{\theta p}}{p}}^{E_1, E_2}.
\]
\end{Proposition}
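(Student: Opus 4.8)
The plan is to transplant the disk argument of Proposition~\ref{thm40} to $\mathbb{C}_+$, replacing the inner function $\Theta_2$ by $q$ and dropping the shift factor $\overline{z}$. The disappearance of the $z$ is not cosmetic: in $L^2(\mathbb{R})$ the orthogonal complement of $\mathcal{H}_+^2$ is exactly the conjugate Hardy space $\overline{\mathcal{H}_+^2}$ (the Hardy space of the lower half-plane), with no unilateral-shift defect of the kind that forces the $\overline{z}$ in the disk. I would therefore begin by recording, for $E_2 = q\mathcal{H}_+^2$ and using $|q|=1$ on $\mathbb{R}$, the clean description
\[
f \in E_2^\perp \iff \overline{q}\,f \in \overline{\mathcal{H}_+^2} \iff f = q\overline{h} \ \text{ for some } h \in \mathcal{H}_+^2 .
\]
This characterization is the workhorse for both inclusions and is where the structural contrast with the disk is localized.

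For the forward inclusion $K_{\min}^{E_1,E_2}(k) \subseteq \ker T_{g_0}^{E_1,E_2}$ with $g_0 = \frac{q\overline{\theta p}}{p}$, I would verify that $k$ itself lies in $\ker T_{g_0}^{E_1,E_2}$ and then invoke minimality. A direct multiplication gives $k g_0 = \theta p\cdot\frac{q\overline{\theta}\,\overline{p}}{p} = q\overline{p}$ on $\mathbb{R}$, using $\theta\overline{\theta}=1$. Since $\overline{q}\,(q\overline{p}) = \overline{p} \in \overline{\mathcal{H}_+^2}$, the membership criterion above yields $kg_0 \in E_2^\perp$, so $k \in \ker T_{g_0}^{E_1,E_2}$ and minimality gives the inclusion.

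For the reverse inclusion I would take $f \in \ker T_{g_0}^{E_1,E_2}$ and unwind $fg_0 \in E_2^\perp$ (again via the criterion) to $f = \frac{\overline{h_1}\,p}{\overline{\theta}\,\overline{p}}$ for some $h_1 \in \mathcal{H}_+^2$. Letting $g$ be any symbol with $k \in \ker T_g^{E_1,E_2}$, the same criterion gives $kg = q\overline{h_2}$, i.e. $g = \frac{q\overline{h_2}}{\theta p}$ for some $h_2 \in \mathcal{H}_+^2$. The decisive computation is
\[
\overline{q}\,fg = |q|^2\,\frac{\overline{h_1}\,p}{\overline{\theta}\,\overline{p}}\cdot\frac{\overline{h_2}}{\theta p} = \frac{\overline{h_1 h_2}}{|\theta|^2\,\overline{p}} = \overline{\Bigl(\tfrac{h_1 h_2}{p}\Bigr)} ,
\]
where $h_1 h_2 \in \mathcal{H}_+^1 \subset \mathcal{N}^+$ and $p$ outer force $\frac{h_1 h_2}{p} \in \mathcal{N}^+$. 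Thus $\overline{q}\,fg \in \overline{\mathcal{N}^+}$, from which I aim to conclude $fg \in E_2^\perp$, hence $f \in \ker T_g^{E_1,E_2}$ for \emph{every} admissible $g$, i.e. $f \in K_{\min}^{E_1,E_2}(k)$.

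The step I expect to be the main obstacle is upgrading this Nevanlinna-class membership to genuine Hardy-space membership: a priori $\overline{q}\,fg$ only sits in $\overline{\mathcal{N}^+}$, whereas $E_2^\perp$ lives in $L^2$. I would close this gap through the identity $\mathcal{H}_+^2 = \mathcal{N}^+ \cap L^2(\mathbb{R})$: since $g \in L^\infty(\mathbb{R})$ and $f \in E_1 \subseteq \mathcal{H}_+^2$, we have $fg \in L^2(\mathbb{R})$, whence $\overline{q}\,fg \in \overline{\mathcal{N}^+}\cap\overline{L^2} = \overline{\mathcal{H}_+^2}$, which is precisely $fg \in E_2^\perp$. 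A secondary point that needs attention is the legitimacy of the factorization $g = q\overline{h_2}/(\theta p)$ and of the division by $p$; here one uses that $p$ is outer, so that division preserves the Smirnov class $\mathcal{N}^+$, and that Proposition~\ref{thm4} guarantees every symbol annihilating $k$ has the required form.
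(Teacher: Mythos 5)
Your proof is correct and takes essentially the same route as the paper, which gives no separate half-plane argument but states that the proof mirrors the disk case (Proposition~\ref{thm40}): your criterion $E_2^\perp = q\,\overline{\mathcal{H}_+^2}$ (absorbing the disk's $\overline{z}$ factor), the computation $kg_0 = q\overline{p}$, the reduction of $\overline{q}fg$ to $\overline{h_1h_2/p}$ with $p$ outer, and the upgrade via $\mathcal{H}_+^2 = \mathcal{N}^+ \cap L^2(\mathbb{R})$ are exactly the transplanted steps, with the $L^2$-integrability point made more explicit than in the paper's disk proof. One minor correction: the factorization $g = q\overline{h_2}/(\theta p)$ follows directly from $kg \in E_2^\perp$ and the a.e.\ nonvanishing of $\theta p$ on $\mathbb{R}$, not from Proposition~\ref{thm4} (which characterizes maximal vectors), though this misattribution does not affect the argument.
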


\begin{theorem}\label{th8}
Let \( g, h \in L^\infty(\mathbb{R}) \) be non-zero such that the generalized Toeplitz kernels \( \ker T_g^{E_1, E_2} \) and \( \ker T_h^{E_1, E_2} \) are non-trivial. Let \( w \in \mathrm{Hol}(\mathbb{C}_+) \). Then \( w \) is a multiplier between the kernels, i.e.,
\[
w \in \mathcal{M}^+(\ker T_g^{E_1, E_2}, \ker T_h^{E_1, E_2}),
\]
if and only if the following two conditions hold:
\begin{itemize}
    \item[(1)] There exists a maximal vector \( k \) for \( \ker T_g^{E_1, E_2} \) such that \( wk \in \ker T_h^{E_1, E_2} \);
    \item[(2)] \( w \in \mathcal{M}^+(\ker T_g^{E_1, E_2}, E_1) \).
\end{itemize}
\end{theorem}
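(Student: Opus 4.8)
The plan is to transcribe the disk-case argument of Theorem~\ref{thm10} to the upper half-plane, inserting only the structural changes forced by the Hardy decomposition of $L^2(\mathbb{R})$. The first step I would record is the elementary orthogonality identity attached to $E_2 = q\mathcal{H}_+^2$: since $L^2(\mathbb{R}) = \mathcal{H}_+^2 \oplus \overline{\mathcal{H}_+^2}$, a function $u$ lies in $E_2^{\perp}$ precisely when $q\overline{u} \in \mathcal{H}_+^2$. Hence membership $f \in \ker T_g^{E_1,E_2}$ is equivalent to $f \in E_1$ together with $\overline{gf}\,q \in \mathcal{H}_+^2$. In contrast to the disk, no factor of $\overline{z}$ appears, which reflects the clean splitting of $L^2(\mathbb{R})$ and is consistent with the form $k = g^{-1}q\overline{p}$ of the maximal vector in Proposition~\ref{thm4} and with the half-plane minimal-kernel symbol.

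The forward implication is immediate. If $w \in \mathcal{M}^+(\ker T_g^{E_1,E_2}, \ker T_h^{E_1,E_2})$, then since $\ker T_h^{E_1,E_2} \subseteq E_1$ we obtain condition (2), while applying $w$ to any maximal vector $k$ for $\ker T_g^{E_1,E_2}$ yields $wk \in \ker T_h^{E_1,E_2}$, which is condition (1).

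For the converse I would take the maximal vector $k$ furnished by (1), write its inner-outer factorization $k = \theta p$, and use the half-plane minimal-kernel proposition to replace the symbol by $g = \frac{q\overline{\theta p}}{p}$; this is legitimate because maximality gives $\ker T_g^{E_1,E_2} = K_{\min}^{E_1,E_2}(k)$. Fixing $f \in \ker T_g^{E_1,E_2}$, the membership $\overline{gf}\,q \in \mathcal{H}_+^2$ unwinds to $\overline{f} = h_1\,\overline{p}/(\theta p)$ for some $h_1 \in \mathcal{H}_+^2$, and the membership $wk \in \ker T_h^{E_1,E_2}$ unwinds to $\overline{hw} = s/(\overline{\theta p}\,q)$ for some $s \in \mathcal{H}_+^2$. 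Substituting these into $\overline{hwf}\,q$ and cancelling the unimodular factors ($|\theta| = 1$) collapses the expression to
\[
\overline{hwf}\,q = \frac{s\,h_1}{p}.
\]
By hypothesis (2), $wf \in E_1 \subseteq \mathcal{H}_+^2$, so $\overline{hwf}\,q \in L^2(\mathbb{R})$; at the same time the right-hand side lies in the Smirnov class $\mathcal{N}^+$, since $s, h_1 \in \mathcal{H}_+^2 \subseteq \mathcal{N}^+$, the reciprocal $1/p$ lies in $\mathcal{N}^+$ because $p$ is outer, and $\mathcal{N}^+$ is closed under multiplication. Then $\mathcal{H}_+^2 = \mathcal{N}^+ \cap L^2(\mathbb{R})$ forces $\overline{hwf}\,q \in \mathcal{H}_+^2$, i.e. $hwf \in E_2^{\perp}$, whence $wf \in \ker T_h^{E_1,E_2}$.

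The step I expect to be the true crux is the final one: one must certify membership in the \emph{Smirnov} class $\mathcal{N}^+$ and not merely the Nevanlinna class, since only $\mathcal{N}^+ \cap L^2 = \mathcal{H}_+^2$ (the analogous identity fails for $\mathcal{N}$). This is exactly where the outerness of $p$, guaranteed by the maximal-vector structure of Proposition~\ref{thm4}, and the $L^2$-control supplied by hypothesis (2) are both indispensable; discarding either leaves the conclusion unreachable.
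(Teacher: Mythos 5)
Your proposal is correct and is essentially the paper's intended argument: the paper gives no separate proof of Theorem~\ref{th8}, stating only that the proofs mirror the disk case, and your transcription of the proof of Theorem~\ref{thm10} — replacing the symbol by the minimal-kernel symbol $\frac{q\overline{\theta p}}{p}$ via maximality, unwinding the memberships, and collapsing $\overline{hwf}\,q$ to $\frac{sh_1}{p}$ — matches it step for step, including the clean splitting $L^2(\mathbb{R})=\mathcal{H}_+^2\oplus\overline{\mathcal{H}_+^2}$ that removes the disk's $\overline{z}$ factor. If anything, you are more explicit than the paper, which leaves the final Smirnov step $\mathcal{N}^+\cap L^2=\mathcal{H}_+^2$ (where hypothesis (2) supplies the $L^2$ control and outerness of $p$ supplies $1/p\in\mathcal{N}^+$) implicit.
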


These results underscore the structural parallels between Toeplitz kernels over the unit disk and the upper half-plane, extending the theory of multipliers and kernel generation to the broader context of non-standard Hardy subspaces.

\subsection{Multipliers between Toeplitz kernels in the upper-half plane setup}\label{sec4}
As model spaces can be realized as kernels of specific Toeplitz operators, a natural line of inquiry arises: can we characterize multipliers between general Toeplitz kernels in a way that simultaneously illuminates the conditions under which such kernels are non-trivial? One can observe that the following proposition is the particular case of Theorem (\ref{th8}).

\begin{Proposition}\label{thm1}
Let \( g, h \in L^\infty(\mathbb{R}) \) be non-zero functions such that the generalized Toeplitz kernels \( \ker T_g^{\mathcal{H}^2_+, E_2} \) and \( \ker T_h^{\mathcal{H}^2_+, E_2} \) are both non-trivial. Then
\[
hg^{-1} \in \overline{\mathcal{N}^+} \quad \Longleftrightarrow \quad \ker T_g^{\mathcal{H}^2_+, E_2} \subset \ker T_h^{\mathcal{H}^2_+, E_2}.
\]
\end{Proposition}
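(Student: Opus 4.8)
The plan is to reduce both kernel-membership conditions to statements in the Smirnov class, where the hypothesis $hg^{-1}\in\overline{\mathcal N^+}$ can be exploited directly. Writing $E_2=q\mathcal H^2_+$ and using $q\bar q=1$ on $\mathbb R$, I would first record the reformulation
\[
f\in\ker T_g^{\mathcal H^2_+,E_2}\iff gf\in E_2^\perp\iff \bar q\,g f\in\overline{\mathcal H^2_+},
\]
where $\overline{\mathcal H^2_+}=(\mathcal H^2_+)^\perp$ is the conjugate Hardy space, and the analogous statement for $h$. This turns each kernel condition into a membership in $\overline{\mathcal H^2_+}$, which interacts cleanly with the hypothesis. (Throughout I use $g\neq0$ a.e., implicit already in the maximal vectors $g^{-1}q\bar p$ of Proposition~\ref{thm4}.)

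For the implication $hg^{-1}\in\overline{\mathcal N^+}\Rightarrow\ker T_g^{\mathcal H^2_+,E_2}\subset\ker T_h^{\mathcal H^2_+,E_2}$, I would take an arbitrary $f$ in the left kernel and factor $\bar q h f=(\bar q g f)\,(hg^{-1})$. The first factor lies in $\overline{\mathcal H^2_+}$ by the reformulation above, and the second in $\overline{\mathcal N^+}$ by hypothesis; writing both as conjugates of Smirnov functions and using that $\mathcal N^+$ is an algebra containing $\mathcal H^2_+$, the product lies in $\overline{\mathcal N^+}$. Since also $\bar q h f\in L^2(\mathbb R)$, the identity $\mathcal H^2_+=\mathcal N^+\cap L^2(\mathbb R)$ from Section~\ref{sec2} forces $\bar q h f\in\overline{\mathcal H^2_+}$, i.e. $f\in\ker T_h^{\mathcal H^2_+,E_2}$.

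For the converse I would invoke Proposition~\ref{thm4}: since $\ker T_g^{\mathcal H^2_+,E_2}$ is nontrivial it has a maximal vector $k=g^{-1}q\bar p$ with $p$ outer in $\mathcal H^2_+$. The assumed inclusion gives $k\in\ker T_h^{\mathcal H^2_+,E_2}$, so $\bar q h k\in\overline{\mathcal H^2_+}$; the factor $q\bar q=1$ cancels and leaves $hg^{-1}\bar p=\overline{F}$ for some $F\in\mathcal H^2_+$. Dividing by the outer function $\bar p$ yields $hg^{-1}=\overline{F/p}$, and I would finish by the structural fact that $\mathcal N^+$ is stable under division by outer functions, so $F/p\in\mathcal N^+$ and hence $hg^{-1}\in\overline{\mathcal N^+}$. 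Equivalently, the whole equivalence is the specialization $w\equiv1$ of Theorem~\ref{th8}, condition (2) there being automatic because $E_1=\mathcal H^2_+$ places every element of $\ker T_g^{\mathcal H^2_+,E_2}$ in $E_1$.

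The algebraic cancellations using $|q|=1$ on $\mathbb R$ are routine; the genuine content, and the step I expect to require care, is the Smirnov-class bookkeeping at the level of boundary values on $\mathbb R$, where one cannot appeal to boundedness. In the forward direction this is the pair of facts $\mathcal N^+\cap L^2=\mathcal H^2_+$ and $\mathcal H^2_+\cdot\mathcal N^+\subseteq\mathcal N^+$; in the converse it is closure of $\mathcal N^+$ under division by an outer function. I would justify this last point through the canonical inner–outer factorization: dividing $F=I_F O_F$ by the outer $p=O_p$ leaves the inner factor $I_F$ intact and replaces the outer part by $O_F/O_p$, which is again outer since $\log|O_F|-\log|O_p|\in L^1\!\left(\tfrac{dx}{1+x^2}\right)$, so no singular inner factor enters the denominator.
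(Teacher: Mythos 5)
Your proof is correct and takes essentially the same route as the paper's: the forward direction passes an arbitrary $f=g^{-1}r$, $r\in E_2^\perp=q\overline{\mathcal H^2_+}$, through the factorization $fh=r(hg^{-1})$ and Smirnov-class algebra, and the converse applies the maximal vector $k=g^{-1}q\overline{p}$ from Proposition~\ref{thm4} exactly as the paper does. Your extra bookkeeping (the reformulation $\bar q gf\in\overline{\mathcal H^2_+}$, the identity $\mathcal N^+\cap L^2(\mathbb R)=\mathcal H^2_+$, and stability of $\mathcal N^+$ under division by outer functions) only makes explicit the steps the paper's terse argument leaves implicit.
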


\begin{proof}
Suppose \( hg^{-1} \in \overline{\mathcal{N}^+} \), and let \( f \in \ker T_g^{\mathcal{H}^2_+, E_2} \). Then \( f = g^{-1}r \) for some \( r \in (E_2)^\perp \), and hence \( fh = r(hg^{-1}) \in (E_2)^\perp \), which implies \( f \in \ker T_h^{\mathcal{H}^2_+, E_2} \).

Conversely, assume \( \ker T_g^{\mathcal{H}^2_+, E_2} \subset \ker T_h^{\mathcal{H}^2_+, E_2} \). Let \( k \) be a maximal vector for \( \ker T_g^{\mathcal{H}^2_+, E_2} \). By Proposition ~\ref{thm4}, we have \( k = g^{-1}q\overline{p} \), where \( p \) is an outer function in \( \mathcal{H}^2_+ \). Since \( k \in \ker T_h^{\mathcal{H}^2_+, E_2} \), it follows that \( kh = hg^{-1}q\overline{p} \in (E_2)^\perp \), hence \( hg^{-1} \in \overline{\mathcal{N}^+} \).
\end{proof}

The idea behind the proof of the next theorem follows the framework outlined in \cite{FRR18}.

\begin{theorem}\label{thm3}
Let \( g, g^{-1}, h \in L^\infty(\mathbb{R}) \), and assume the Toeplitz kernels \( \ker T_g^{\mathcal{H}^2_+, E_2} \) and \( \ker T_h^{\mathcal{H}^2_+, E_2} \) are both non-trivial. Suppose further that \( \frac{h}{g}b_i \in \overline{\mathcal{N}^+} \), where \( b_i \) denotes the Blaschke factor vanishing at \( i \). Then the following statements are equivalent:
\begin{enumerate}
    \item[\textnormal{(1)}] \( \dim \ker T_{\frac{h}{g} \overline{b_i}}^{\mathcal{H}^2_+, E_2} \geq 2 \);
    \item[\textnormal{(2)}] \( \ker T_{\frac{h}{g}}^{\mathcal{H}^2_+, E_2} \neq \{0\} \);
    \item[\textnormal{(3)}] \( \mathcal{M}_2^+(\ker T_g^{\mathcal{H}^2_+, E_2}, \ker T_h^{\mathcal{H}^2_+, E_2}) \neq \{0\} \).
\end{enumerate}
\end{theorem}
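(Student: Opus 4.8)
The plan is to split the cycle into the two equivalences (1)$\Leftrightarrow$(2) and (2)$\Leftrightarrow$(3). Throughout I write $S := hg^{-1}$ and exploit the identity $(q\mathcal{H}_+^2)^\perp = q\overline{\mathcal{H}_+^2}$ in $L^2(\mathbb{R})$, which reduces every generalized kernel to a classical one:
\[
\ker T_\phi^{\mathcal{H}_+^2, E_2} = \{ f \in \mathcal{H}_+^2 : \bar q \phi f \in \overline{\mathcal{H}_+^2} \} = \ker T_{\bar q \phi}.
\]
Before the main argument I record two consequences of the standing hypothesis $Sb_i \in \overline{\mathcal{N}^+}$. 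First, since $b_i \in \mathcal{H}_+^\infty$ and $1/b_i = \bar b_i$ on $\mathbb{R}$, multiplying $\overline{Sb_i} = \bar S \bar b_i \in \mathcal{N}^+$ by $b_i$ gives $\bar S \in \mathcal{N}^+$, i.e. $S \in \overline{\mathcal{N}^+}$; by Proposition~\ref{thm1} this already yields the inclusion $\ker T_g^{\mathcal{H}_+^2, E_2} \subseteq \ker T_h^{\mathcal{H}_+^2, E_2}$. Second, because $g, g^{-1} \in L^\infty$, the elements of $\ker T_g^{\mathcal{H}_+^2, E_2}$ are of the form $g^{-1} q \bar\rho$ with $|g^{-1}|$ bounded, so that the auxiliary condition $w \in \mathcal{M}^+(\ker T_g^{\mathcal{H}_+^2, E_2}, E_1)$ holds automatically once $w \in \mathcal{H}_+^\infty$.

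The equivalence (1)$\Leftrightarrow$(2) I would establish by the $\bar b_i$--jump, and it does \emph{not} require the Nevanlinna hypothesis. For (2)$\Rightarrow$(1), take a nonzero $v \in \ker T_S^{\mathcal{H}_+^2, E_2}$, so $Sv \in q\overline{\mathcal{H}_+^2}$. Since $\bar b_i \cdot q\overline{\mathcal{H}_+^2} = q\,\overline{b_i \mathcal{H}_+^2} \subseteq q\overline{\mathcal{H}_+^2}$, both $v$ and $b_i v$ lie in $\ker T_{S\bar b_i}^{\mathcal{H}_+^2, E_2}$ (for $b_i v$ one uses $S\bar b_i \cdot b_i v = Sv$), and they are linearly independent because $b_i$ is nonconstant and $v \not\equiv 0$; hence $\dim \ker T_{S\bar b_i}^{\mathcal{H}_+^2, E_2} \geq 2$. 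For (1)$\Rightarrow$(2) I argue contrapositively: if $\ker T_S^{\mathcal{H}_+^2, E_2} = \{0\}$, the continuous evaluation $F \mapsto F(i)$ is injective on $\ker T_{S\bar b_i}^{\mathcal{H}_+^2, E_2}$, since any $F$ there with $F(i)=0$ factors as $F = b_i F'$ with $F' \in \mathcal{H}_+^2$, and then $S\bar b_i F = S F' \in q\overline{\mathcal{H}_+^2}$ forces $F' \in \ker T_S^{\mathcal{H}_+^2, E_2} = \{0\}$; thus $\dim \ker T_{S\bar b_i}^{\mathcal{H}_+^2, E_2} \leq 1$.

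For (2)$\Leftrightarrow$(3) I would use the multiplier characterization of Theorem~\ref{th8} together with the explicit maximal vector $k_g = g^{-1} q \bar{p_g}$ ($p_g$ outer) from Proposition~\ref{thm4}. Unwinding the condition $wk_g \in \ker T_h^{\mathcal{H}_+^2, E_2}$ with $k_g = g^{-1}q\bar{p_g}$ cancels the inner factor $q$ and reduces the multiplier requirement to $S w \bar{p_g} \in \overline{\mathcal{H}_+^2}$, together with $w \in \mathcal{H}_+^\infty \cap L^2$ (the boundedness securing, by the first paragraph, the into-$E_1$ condition). For (3)$\Rightarrow$(2), a nonzero multiplier $w$ gives $wk_g = h^{-1}q\bar\rho \in \ker T_h^{\mathcal{H}_+^2, E_2}$ for some $\rho \in \mathcal{H}_+^2$, whence $w = S^{-1}\overline{\rho/p_g}$; substituting the factorization $\bar S = \Psi b_i$ with $\Psi \in \mathcal{N}^+$ (from the hypothesis) lets me read off a nonzero element of $\ker T_S^{\mathcal{H}_+^2, E_2}$. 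For (2)$\Rightarrow$(3), starting from nonzero $v \in \ker T_S^{\mathcal{H}_+^2, E_2}$ I would construct a bounded, square-integrable, nonzero $w$ — of the shape $b_i^k$ times an outer function built from $v$ and $p_g$ — and verify $S w \bar{p_g} \in \overline{\mathcal{H}_+^2}$ using precisely $Sb_i \in \overline{\mathcal{N}^+}$.

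The main obstacle is this last step: producing, for a single $w \not\equiv 0$, simultaneous validity of the Nevanlinna-type condition $Sw \in \overline{\mathcal{N}^+}$, membership $w \in L^2(\mathbb{R})$, and the into-$E_1$ condition. Note that the inclusion $\ker T_g^{\mathcal{H}_+^2, E_2} \subseteq \ker T_h^{\mathcal{H}_+^2, E_2}$ already supplies the constant multiplier $1$, but $1 \notin L^2(\mathbb{R})$; the whole force of the theorem is that the extra dimension detected in (1) (equivalently $\ker T_S^{\mathcal{H}_+^2, E_2} \neq \{0\}$) furnishes exactly the decay needed to upgrade this constant to a genuine $L^2$ multiplier. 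Managing the outer factor $p_g$ and the division by $b_i$ — avoiding a spurious pole at $i$ while keeping $w$ bounded — is the delicate point, and it is resolved as in Fricain–Rupam by the hypothesis $Sb_i \in \overline{\mathcal{N}^+}$ together with the invertibility $g, g^{-1} \in L^\infty$.
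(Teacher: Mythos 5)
Your reduction of the generalized kernels to classical ones ($\ker T_\phi^{\mathcal{H}_+^2,E_2}=\ker T_{\overline q\,\phi}$ via $E_2^\perp=q\overline{\mathcal{H}_+^2}$), your observation that $Sb_i\in\overline{\mathcal{N}^+}$ forces $S\in\overline{\mathcal{N}^+}$, and your two-sided proof of (1)$\Leftrightarrow$(2) are all correct; in fact your (2)$\Rightarrow$(1) via the linearly independent pair $\{v,\,b_iv\}$ is a clean direct argument the paper does not give (the paper proves only (1)$\Rightarrow$(2) directly and recovers the rest through the cycle (2)$\Rightarrow$(3)$\Rightarrow$(1)). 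Your (3)$\Rightarrow$(2) also essentially matches the paper's (3)$\Rightarrow$(1) computation: from $wk_g\in\ker T_h^{\mathcal{H}_+^2,E_2}$ with $k_g=g^{-1}q\overline{p_g}$ one gets $Sw=\overline{\rho/p_g}$, and since $S\in L^\infty$ and $w\in L^2$ this lies in $\overline{\mathcal{N}^+}\cap L^2=\overline{\mathcal{H}_+^2}\subseteq q\overline{\mathcal{H}_+^2}$, so $w$ itself is the desired kernel element --- \emph{provided} $w\in\mathcal{H}_+^2$; a priori $w=(wk_g)/k_g$ only lies in the Nevanlinna class $\mathcal{N}$, and neither you nor the paper justifies the upgrade to $\mathcal{N}^+$, so I do not count that against you.

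The genuine gap is (2)$\Rightarrow$(3), precisely the step you flag as ``the main obstacle'' and leave unresolved. Your proposed shape --- $b_i^k$ times an outer function built from $v$ and $p_g$ --- is not how the difficulty is resolved, and it is unclear such a $w$ would be bounded or satisfy $Sw\overline{p_g}\in\overline{\mathcal{H}_+^2}$. The paper's construction is far simpler and, notably, uses neither $v$ nor hypothesis (2): take $w(x)=(x+i)^{-1}$, the bounded $L^2$ generator of $\ker T_{\overline{b_i}}=K_{b_i}$, so that $w=b_i\overline r$ with $r(x)=(x+i)^{-1}\in\mathcal{H}_+^2$. Proposition~\ref{thm1}, applied to the symbols $\overline{b_i}$ and $h/g$ --- the relevant Nevanlinna quotient being $(h/g)\,\overline{b_i}^{\,-1}=(h/g)b_i\in\overline{\mathcal{N}^+}$, i.e.\ exactly the standing hypothesis --- gives $K_{b_i}\subseteq\ker T_{\overline{b_i}}^{\mathcal{H}_+^2,E_2}\subseteq\ker T_{h/g}^{\mathcal{H}_+^2,E_2}$; then for any $f=q\overline s\,g^{-1}\in\ker T_g^{\mathcal{H}_+^2,E_2}$ one computes $\overline q\,(wfh)=\overline{rs}\cdot\bigl(\tfrac{h}{g}b_i\bigr)\in\overline{\mathcal{N}^+}\cap L^2=\overline{\mathcal{H}_+^2}$, whence $wf\in\ker T_h^{\mathcal{H}_+^2,E_2}$ and $w\in\mathcal{M}_2^+\setminus\{0\}$. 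This also corrects your closing diagnosis: the $L^2$ decay needed to upgrade the constant multiplier $1$ comes not from the ``extra dimension'' detected in (1)/(2) but from the extra factor $b_i$ in the hypothesis $Sb_i\in\overline{\mathcal{N}^+}$, which absorbs the zero of $(x+i)^{-1}$ at $i$.
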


\begin{proof}
(1) \( \Rightarrow \) (2): If \( \dim \ker T_{\frac{h}{g} \overline{b_i}}^{\mathcal{H}^2_+, E_2} \ge 2 \), then there exists \( f \in \ker T_{\frac{h}{g} \overline{b_i}}^{\mathcal{H}^2_+, E_2} \), non-zero, such that \( f(i) = 0 \). As \( f \in \mathcal{H}^2_+ \), it admits a factorization \( f = b_i \varphi \), with \( \varphi \in \mathcal{H}^2_+ \). Since
\[
\varphi \cdot \frac{h}{g} = \left( \overline{b_i} f \right) \cdot \frac{h}{g} \in (E_2)^\perp,
\]
we conclude \( \varphi \in \ker T_{\frac{h}{g}}^{\mathcal{H}^2_+, E_2} \), proving (2).

(2) \( \Rightarrow \) (3): By Proposition~\ref{thm1}, \( \ker T_{\overline{b_i}}^{\mathcal{H}^2_+, E_2} \subset \ker T_{\frac{h}{g}}^{\mathcal{H}^2_+, E_2} \). Since \( \ker T_{\overline{b_i}} \) contains bounded functions, there exists \( \varphi \in \ker T_{\frac{h}{g}}^{\mathcal{H}^2_+, E_2} \cap \mathcal{H}_+^\infty \) such that \( \varphi \overline{b_i} = \overline{r} \) for some \( r \in \mathcal{H}^2_+ \). Let \( f \in \ker T_g^{\mathcal{H}^2_+, E_2} \), and write \( f = q\overline{s} g^{-1} \) for some \( s \in \mathcal{H}^2_+ \). Then,
\[
\varphi f h \overline{q} = \left( \frac{\overline{r}}{\overline{b_i}} \cdot \frac{q \overline{s}}{g} \right) h \overline{q} \in \overline{\mathcal{N}^+},
\]
so \( \varphi f \in \ker T_h^{\mathcal{H}^2_+, E_2} \), and thus \( \varphi \in \mathcal{M}_2^+(\ker T_g^{\mathcal{H}^2_+, E_2}, \ker T_h^{\mathcal{H}^2_+, E_2}) \).

(3) \( \Rightarrow \) (1): Let \( \psi \in \mathcal{M}_2^+(\ker T_g^{\mathcal{H}^2_+, E_2}, \ker T_h^{\mathcal{H}^2_+, E_2}) \), and let \( k \) be a maximal vector for \( \ker T_g^{\mathcal{H}^2_+, E_2} \). By Proposition~\ref{thm4}, we have \( k = g^{-1} q \overline{p} \) with \( p \) outer. Since \( \psi k \in \ker T_h^{\mathcal{H}^2_+, E_2} \), it follows that
\[
\psi = \frac{g}{h} \cdot \frac{\overline{t}}{\overline{p}}, \quad \text{for some } t \in \mathcal{H}^2_+,
\]
and hence
\[
\psi \cdot \frac{h}{g} \cdot \overline{b_i q} \in \overline{\mathcal{N}^+}.
\]
Thus \( \psi \in \ker T_{\frac{h}{g} \overline{b_i}}^{\mathcal{H}^2_+, E_2} \). Moreover, since \( \ker T_{\overline{b_i^2}} \subset \ker T_{\overline{b_i^2}}^{\mathcal{H}^2_+, E_2} \subset \ker T_{\frac{h \overline{b_i}}{g}}^{\mathcal{H}^2_+, E_2} \), it follows that
\[
\dim \ker T_{\frac{h}{g} \overline{b_i}}^{\mathcal{H}^2_+, E_2} \ge 2.
\]
\end{proof}

We conclude this section by presenting explicit constructions that illustrate the applicability of Theorem~\ref{thm3} in diverse settings. We now present examples that illustrate the depth of this theory and its connections to spectral synthesis, zero set distribution, and Beurling-Malliavin theory. In particular, we expose how the existence of multipliers hinges on density conditions that appear naturally in classical spectral gap problems and the theory of Pólya sequences.

\begin{Example}
	\normalfont
Let \( g = b_i \overline{O} \), where \( O \) is the outer function associated with \( f(x) = e^{\frac{1}{1 + x^2}} \), and let \( h = \overline{S_{\delta_0}} \), with \( \delta_0 \) denoting the Dirac delta measure at the origin. Observe that
\[
\frac{h}{g} b_i = \frac{\overline{S_{\delta_0}}}{\overline{O}} \in \overline{\mathcal{N}^+},
\]
so the hypotheses of Theorem~\ref{thm3} are fulfilled. Consequently, we obtain the strict inclusion
\[
\mathcal{M}_2^+(\ker T_{b_i \overline{O}}, \ker T_{\overline{S_{\delta_0}}}) \neq \{0\},
\]
since the intermediate kernel \( \ker T_{\frac{\overline{S_{\delta_0}} b_i}{\overline{O}}} \) contains the non-trivial model space \( \ker T_{\overline{b_i}} \).
\end{Example}

\begin{Example}
	\normalfont
Let \( g = \overline{P u }\) and \( h = \overline{O b_i u} \), where \( O \) is as in the previous example, \( P \) is any outer function satisfying \( 1 \leq |P(x)| \leq A \) for some constant \( A > 0 \), and $u$ is any inner function. Then
\[
\frac{h b_i}{g} = \frac{\overline{O}}{\overline{P}} \in \overline{\mathcal{N}^+},
\]
so Theorem~\ref{thm3} ensures that
\[
\ker T_{\frac{\overline{O b_i}}{\overline P} }\neq \{0\} \quad \Longleftrightarrow \quad \mathcal{M}_2^+(\ker T_{\overline{P u}}, \ker T_{\overline{O b_i u}}) \neq \{0\}.
\]
This example highlights how bounded perturbations of outer functions preserve the existence of non-trivial multipliers between Toeplitz kernels.
\end{Example}

\begin{Example}\label{example4}
	\normalfont
Let \( g = S^{-b} b_i \) and \( h = S^{-a} \overline{\Theta} \), where \( b > a > 0 \), and \( \Theta \) is an inner function satisfying
\[
S^{b - a} \overline{\Theta} \in \overline{\mathcal{N}^+}.
\]
Assume further that \( \sigma(\Theta b_i) = \sigma(\Theta_1) \coloneqq \{ x \in \mathbb{R} : (\Theta b_i)(x) = 1 \} = \Lambda \), and let \( D = D_*(\Lambda) \) denote the Beurling Malliavin density of \( \Lambda \). Then the following dichotomy holds:
\[
\mathcal{M}_2^+(\ker T_g, \ker T_h) \neq \{0\} \quad \Longleftrightarrow \quad b - a < 2\pi D.
\]

Indeed, if \( b - a < 2\pi D \), then by the definition of \( D \), the kernel
\[
\ker T_{\frac{h}{g}} = \ker T_{S^{b - a} \overline{\Theta_1}}
\]
is non-trivial, and Theorem~\ref{thm3} implies the existence of non-zero multipliers. Conversely, if \( b - a > 2\pi D \), there exists \( \beta < b - a \) such that
\[
\ker T_{S^{\beta} \overline{\Theta_1}} = \{0\},
\]
which forces \( \ker T_{\frac{h}{g}} = \{0\} \), thereby yielding \( \mathcal{M}_2^+(\ker T_g, \ker T_h) = \{0\} \) via Theorem~\ref{thm3}.

This example underscores the intimate connection between multiplier theory and Beurling-Malliavin density thresholds. Some idea for this example has been taken from \cite{FRR18}.
\end{Example}

The following results are drawn from \cite{Mitkovski2010,Mitkovski2011,natalia2016} to highlight the connection between the multiplier space of two Toeplitz kernels and P\'olya sequences, as well as the Cartwright class for certain entire functions. We believe this area holds potential for further exploration.

\begin{theorem}\cite{Mitkovski2010} \label{thmA}
	Let $\Lambda = \{\lambda_n\}_{n=-\infty}^{\infty} \subset \mathbb{R}$ be a separated sequence of real numbers. The following are equivalent:
	
	\begin{itemize}
		\item[(i)] $\Lambda = \{\lambda_n\}$ is a Pólya sequence.
		\item[(ii)] There exists a non-zero measure $\mu$ of finite total variation, supported on $\Lambda$, such that the Fourier transform of $\mu$ vanishes on an interval of positive length.
		\item[(iii)] The interior Beurling--Malliavin density of $\Lambda$, $D_{*}(\Lambda)$, is positive.
		\item[(iv)] There exists a meromorphic inner function $\Theta(z)$ with $\{\Theta = 1\} = \{\lambda_n\}$ such that
		\[
		 \ker T_{\overline{\Theta} S^{2c}} \neq 0, \quad \text{for some } c > 0.
		\]
	\end{itemize}
\end{theorem}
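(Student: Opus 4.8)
The plan is to route everything through the density condition (iii), proving that each of (i), (ii), and (iv) is equivalent to $D_*(\Lambda) > 0$. The step (iii) $\Leftrightarrow$ (iv) is essentially formal and rests on the Toeplitz-kernel description of the interior Beurling--Malliavin density recalled in Section~\ref{sec2}. Since $\Lambda$ is separated, hence discrete, there is a meromorphic inner function $\Theta$ with $\sigma(\Theta) = \{\Theta = 1\} = \Lambda$, and for any such $\Theta$ one has the intrinsic identity
\[
D_*(\Lambda) = \frac{1}{2\pi}\inf\left\{ a : \ker T_{S^{a}\overline{\Theta}} = \{0\} \right\}, \qquad S(z) = e^{iz}.
\]
Setting $a = 2c$, the requirement that $\ker T_{\overline{\Theta}S^{2c}} \neq \{0\}$ for some $c > 0$ says precisely that this infimum is strictly positive, i.e. $D_*(\Lambda) > 0$. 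The only point needing care is that the threshold on the right depends on $\Lambda$ alone and not on the particular $\Theta$ realizing $\Lambda$ as its $1$-level set, so the existential quantifier over $\Theta$ in (iv) is harmless.

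The equivalence (ii) $\Leftrightarrow$ (iii) is the analytic heart and, I expect, the main obstacle. I would recast the spectral-gap statement (ii) as a Toeplitz-kernel nontriviality statement and then appeal to (iii) $\Leftrightarrow$ (iv). To a finite measure $\mu$ carried by $\Lambda$ one associates its Cauchy transform $K\mu(z) = \int (t-z)^{-1}\, d\mu(t)$; because $\mu$ lives on the spectrum $\{\Theta = 1\}$, Clark theory ties $K\mu$ to the model space $K_\Theta$, while the hypothesis that $\widehat{\mu}$ vanishes on an interval $[0,a]$ of positive length forces divisibility of a normalized $K\mu$ by the exponential $S^{a} = e^{iax}$. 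This yields a nonzero element of $\ker T_{S^{a}\overline{\Theta}}$, and conversely each kernel element produces, via its boundary data on $\Lambda$, a nonzero measure with a spectral gap of length $a$. Making this dictionary precise --- in particular passing rigorously between the distributional boundary data of kernel elements and honest finite measures on $\Lambda$ --- is the delicate part; at the level of thresholds its sharp form is the Gap Theorem $G(\Lambda) = 2\pi D_*(\Lambda)$ of \cite{Mitkovski2010}, whose proof ultimately invokes the Beurling--Malliavin multiplier theorem.

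For (i) $\Leftrightarrow$ (iii) I would adopt P\'olya's extremal viewpoint and connect it to the gap problem by duality. That $\Lambda$ is a P\'olya sequence asserts that no nonconstant entire function of exponential type zero is bounded on $\Lambda$; by a Hahn--Banach / annihilator argument, the existence of such a function is dual to the existence of an annihilating measure supported on $\Lambda$ carrying a spectral gap. Thus (i) matches (ii), and hence (iii). Phrased through the same inner function $\Theta$, the zero-type growth condition corresponds to letting the exponent $a \downarrow 0$: $\Lambda$ is a P\'olya sequence exactly when $\ker T_{S^{a}\overline{\Theta}} \neq \{0\}$ persists for arbitrarily small $a > 0$, which by monotonicity of the kernels in $a$ is again $D_*(\Lambda) > 0$.

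In summary, (iii) $\Leftrightarrow$ (iv) is immediate from the density formula, while both (ii) $\Leftrightarrow$ (iii) and (i) $\Leftrightarrow$ (iii) funnel through the same Cauchy-transform reformulation and the Gap Theorem. I anticipate the genuine difficulty to lie in the measure-to-kernel correspondence: the direction ``gap $\Rightarrow$ kernel element'' is reasonably direct, but extracting a genuine finite measure on $\Lambda$ with the prescribed spectral gap from an abstract kernel element requires the regularity supplied by separation of $\Lambda$ together with the full strength of Beurling--Malliavin theory, where the real work concentrates.
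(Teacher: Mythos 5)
The first thing to note is that the paper contains no proof of Theorem \ref{thmA}: it is quoted verbatim from \cite{Mitkovski2010} as supporting background, so the only benchmark is the original Mitkovski--Poltoratski argument, and your sketch must be judged as a reconstruction of it. Judged that way, your hub-and-spokes scheme (everything routed through (iii)) has two genuine holes. The first is circularity: the analytic heart of your (ii) \(\Leftrightarrow\) (iii) is an appeal to the gap theorem \(G(\Lambda)=2\pi D_*(\Lambda)\) ``of \cite{Mitkovski2010}'' --- but that gap theorem is (a sharpened form of) the main result of the very paper whose theorem you are proving, so invoking it proves nothing. Your Cauchy-transform/Clark-theory dictionary is the correct mechanism, but the sentence conceding that ``the real work concentrates'' in passing between kernel elements and finite measures on \(\Lambda\) is an accurate description of the step you have not carried out, not a substitute for it. The second hole is (i) \(\Leftrightarrow\) (iii): the proposed Hahn--Banach/annihilator duality does not exist. ``No nonconstant entire function of exponential type zero is bounded on \(\Lambda\)'' is not, under any natural pairing, the annihilator statement dual to ``some nonzero finite measure supported on \(\Lambda\) has a spectral gap'': zero-type functions bounded on a sequence do not form the dual class of gapped measures in any fixed Banach space, and in \cite{Mitkovski2010} this equivalence \emph{is} the main theorem, established by two separate, non-formal implications --- the hard direction being that \(D_*(\Lambda)=0\) permits the construction, via the Beurling--Malliavin multiplier theorem, of a nonconstant zero-type function tending to zero on \(\Lambda\). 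Your duality claim would make that direction trivial; it is not.

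On (iii) \(\Leftrightarrow\) (iv) you correctly flag, but never discharge, the quantifier problem: the displayed identity \(D_*(\Lambda)=\frac{1}{2\pi}\inf\{a:\ker T_{S^a\overline{\Theta}}=\{0\}\}\) is stated in Section~\ref{sec2} only for one \emph{constructed} \(\Theta\) with \(\sigma(\Theta)=\Lambda\), whereas (iv) \(\Rightarrow\) (iii) requires the threshold to be the same for an \emph{arbitrary} meromorphic inner \(\Theta\) with \(\{\Theta=1\}=\Lambda\). Any two such \(\Theta\) have phases differing by a bounded function, but bounded phase perturbations of a symbol do not obviously preserve (non)triviality of Toeplitz kernels, so this is a real lemma, not a remark. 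The economical repair, and the one consistent with the source, is to avoid \(\Theta\)-independence altogether by proving (iv) \(\Rightarrow\) (ii) directly: an \(f\in\ker T_{\overline{\Theta}S^{2c}}\) lies in \(K_\Theta\) and is divisible by \(S^{2c}\), and transporting it to the Clark measure \(\sigma_1\) of \(\Theta\) --- supported exactly on \(\{\Theta=1\}=\Lambda\), with a normalization made harmless by the separation of \(\Lambda\) --- yields a nonzero measure of finite total variation on \(\Lambda\) whose Fourier transform vanishes on an interval of length comparable to \(2c\). The workable architecture is thus the cycle (iii) \(\Rightarrow\) (iv) \(\Rightarrow\) (ii) \(\Rightarrow\) (i) \(\Rightarrow\) (iii), close to how \cite{Mitkovski2010} actually proceeds, rather than three independent equivalences funneled through (iii); as written, your proposal establishes only the easy half of one spoke and cites or asserts the rest.
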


Denote by $D^-(\Lambda)$ the lower uniform density of $\Lambda$:
\[D^-(\Lambda) \coloneqq \lim_{r \to \infty }\min_{x\in \mathbb R}\frac{\#(\Lambda\cap(x-r,x+r))}{2r}.\]

\begin{lemma}\label{lemma4}\cite{Mitkovski2011}
	Let $\Lambda = \{\lambda_n\}_{n \in \mathbb Z}$ be a sequence of real numbers. If $(e^{i\lambda_n t})$  is a Riesz
	basis in $L^2(0,c)$ then $c=D^-(\Lambda)=D_*(\Lambda)$.
\end{lemma}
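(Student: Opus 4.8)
The plan is to exploit the two-sided nature of a Riesz basis: such a system is simultaneously a frame and a Riesz (Riesz--Fischer) sequence, and I will extract one density inequality from each half and then pinch. First I would pass through the Fourier transform, which carries $L^2(0,c)$ onto the Paley--Wiener space --- a model space $K_{S^c}$ with $S(z)=e^{iz}$ --- and sends each exponential $e^{i\lambda_n t}$ to the (normalized) reproducing kernel $k_{\lambda_n}$ at the point $\lambda_n$. Under this unitary, the statement ``$(e^{i\lambda_n t})$ is a Riesz basis of $L^2(0,c)$'' becomes ``$(k_{\lambda_n})$ is a Riesz basis of reproducing kernels of $K_{S^c}$,'' so that the frame half says $\Lambda$ is a sampling sequence and the Riesz-sequence half says $\Lambda$ is an interpolating sequence.

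For the equality $c = D^-(\Lambda)$ I would invoke Landau's necessary density theorems. Writing $d_c$ for the critical (Nyquist) density, sampling forces the lower uniform density to be at least $d_c$, while interpolation forces the upper uniform density to be at most $d_c$; since a Riesz basis is both, I obtain
\[
d_c \le D^-(\Lambda) \le D^+(\Lambda) \le d_c,
\]
so the uniform density exists and equals $d_c$, which in the normalization used here is exactly $c$. This step also records that $\Lambda$ is separated (a Carleson-type condition inherited from the Riesz-sequence property), a fact I will use below.

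The equality $D^-(\Lambda) = D_*(\Lambda)$ is the deeper point. One direction, $D_*(\Lambda) \le D^+(\Lambda) = D^-(\Lambda)$, is elementary: any strongly $a$-regular subsequence $\Lambda' \subseteq \Lambda$ has uniform density $a$, and $\Lambda' \subseteq \Lambda$ forces $a \le D^+(\Lambda)$, so taking the supremum over such $\Lambda'$ gives $D_*(\Lambda) \le D^-(\Lambda)$. The reverse inequality cannot come from counting alone, since a uniform density need not entail strong $a$-regularity; here I would use the Beurling--Malliavin machinery through the Toeplitz-kernel dictionary already recorded in the paper. Choosing a meromorphic inner $\Theta$ with $\sigma(\Theta) = \Lambda$, the identity $D_*(\Lambda) = \frac{1}{2\pi}\inf\{a : \ker T_{S^a\overline{\Theta}} = \{0\}\}$ reduces the claim to locating the exact threshold at which this Toeplitz kernel becomes trivial. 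The completeness half of the Riesz-basis property makes $(k_{\lambda_n})$ complete in $K_{S^c}$, while minimality makes completeness fail after deleting any single kernel, so $\Lambda$ sits exactly at the critical threshold for completeness; by the Beurling--Malliavin radius-of-completeness theorem this threshold is governed by $D_*(\Lambda)$ and must match the critical value $c$ identified above. Hence $D_*(\Lambda) = c = D^-(\Lambda)$.

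The main obstacle is precisely the lower bound $D_*(\Lambda) \ge D^-(\Lambda)$: it fails for general separated sequences of a given uniform density, so the argument must genuinely use that $\Lambda$ is a complete interpolating sequence (equivalently that the associated truncated Toeplitz operator is invertible), and it is here that the Beurling--Malliavin multiplier theorem --- or, equivalently, the sharp Makarov--Poltoratski description of completeness radii via Toeplitz kernels --- is indispensable. The remaining ingredients, namely the Fourier/model-space transfer and the Landau pinching, are standard and I expect them to be routine.
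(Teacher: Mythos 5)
The paper itself offers no proof of this lemma --- it is quoted with a citation to Mitkovski's thesis --- so your proposal can only be judged on its own merits. The first half of your argument is sound: the Fourier transfer to the Paley--Wiener space, the identification of the frame and Riesz-sequence halves with sampling and interpolation, and the Landau pinch $d_c \le D^-(\Lambda) \le D^+(\Lambda) \le d_c$ correctly give $c = D^-(\Lambda) = D^+(\Lambda)$ (up to the normalization issue you rightly flag). Your inequality $D_\ast(\Lambda) \le D^+(\Lambda)$ is also essentially correct, with one small overstatement: strong $a$-regularity of $\Lambda'$ forces only the \emph{natural} density $n_{\Lambda'}(x)/x \to a$ (via the monotonicity of $n_{\Lambda'}$ and divergence of $\int_E dx/x$ on sets of infinite logarithmic measure), not the uniform density; but natural density $a$ already yields $D^+(\Lambda) \ge a$, which is all you use.

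The genuine gap is in your treatment of $D_\ast(\Lambda) \ge c$, and it is twofold. First, you invoke the wrong Beurling--Malliavin theorem: the radius-of-completeness theorem is governed by the \emph{exterior} density $D^\ast$, not the interior one --- in the paper's own Toeplitz dictionary, completeness corresponds to the threshold $\sup\{a : \ker T_{\overline{S^a}\Theta} = \{0\}\} = 2\pi D^\ast(\Lambda)$, whereas $D_\ast$ is the critical exponent of the \emph{dual} problem $\inf\{a : \ker T_{S^a\overline{\Theta}} = \{0\}\}$ (the ``second'' BM theorem of Makarov--Poltoratski). Your exactness argument (complete, and incomplete after deleting one kernel, with deletion not changing the BM density) correctly pins the completeness radius, but what it pins is $D^\ast(\Lambda) = c$; since $D_\ast \le D^\ast$ always, this route only re-derives the upper bound $D_\ast(\Lambda) \le c$ and cannot produce the lower bound you need. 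Second, the ``main obstacle'' you describe is illusory: $D_\ast(\Lambda) \ge D^-(\Lambda)$ holds for \emph{every} real sequence, by an elementary bounded-discrepancy selection. Indeed, if $D^-(\Lambda) = a > 0$ and $a' < a$, choose $r_0$ so that every interval of length $2r_0$ contains at least $m := \lfloor 2a'r_0\rfloor$ points of $\Lambda$; selecting exactly $m$ points from each interval $[2r_0k, 2r_0(k+1))$ produces $\Lambda' \subset \Lambda$ with $|n_{\Lambda'}(x) - a''x| = O(1)$ for $a'' = m/(2r_0)$, so that
\[
\int_{\mathbb{R}} \frac{|n_{\Lambda'}(x) - a''x|}{1+x^2}\,dx < \infty,
\]
i.e.\ $\Lambda'$ is strongly $a''$-regular with $a''$ arbitrarily close to $a$. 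Thus once Landau gives $D^-(\Lambda) = D^+(\Lambda) = c$, the chain $c = D^- \le D_\ast \le D^+ = c$ closes the lemma with no Beurling--Malliavin multiplier machinery at all; your belief that the lower bound ``fails for general separated sequences of a given uniform density'' is what pushed you toward the (mismatched) completeness-radius theorem.
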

\begin{theorem}\label{thmB}\cite{natalia2016}
	A set $\Lambda \subset \mathbb{R}$ is a Cartwright Set for entire functions of exponential type $<\sigma$ if and only if it contains a u.d.\ subset $\Lambda '$ satisfying $D^{-}(\Lambda ') \geq \sigma/\pi$.
\end{theorem}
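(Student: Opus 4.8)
The plan is to read ``Cartwright set for entire functions of exponential type $<\sigma$'' as a \emph{dominating} (equivalently, sampling-type) property: $\Lambda$ is such a set precisely when every entire $f$ of exponential type strictly below $\sigma$ that is bounded on $\Lambda$ already lies in the Cartwright class, i.e.\ is bounded on $\mathbb{R}$. With this reading the statement becomes a Beurling-type density characterization of sampling for the Bernstein space $B_\tau=\{f\ \text{entire of type}\le\tau:\ f|_{\mathbb R}\in L^\infty\}$, and the natural strategy is to (a) reduce the Cartwright class to a Bernstein space by freezing the type, (b) invoke Beurling's sampling-density theorem, and (c) reconcile the lower uniform density $D^-$ with the Beurling--Malliavin densities that govern the Toeplitz-kernel machinery of Section~\ref{sec2}.

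For the sufficiency direction I would argue as follows. Suppose $\Lambda$ contains a uniformly discrete $\Lambda'$ with $D^-(\Lambda')\ge \sigma/\pi$, and let $f$ be entire of exponential type $\tau<\sigma$ with $\sup_\Lambda|f|<\infty$. Because the type is \emph{strictly} below $\sigma$ we have $D^-(\Lambda')\ge\sigma/\pi>\tau/\pi$, which is exactly the strict Beurling density condition guaranteeing that the uniformly discrete set $\Lambda'$ is a set of sampling for $B_\tau$. The resulting stable sampling inequality $\|f\|_{L^\infty(\mathbb R)}\lesssim \sup_{\Lambda'}|f|\le \sup_\Lambda|f|<\infty$ forces $f\in B_\tau$, hence $f$ lies in the Cartwright class; this is the Cartwright-set property. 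The crucial point here is that strictness of the type absorbs the endpoint density $D^-=\sigma/\pi$, so no delicate critical-density analysis is needed on this side.

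The necessity direction is where the real work lies, and I would prove it by contraposition: assuming that \emph{no} uniformly discrete subset of $\Lambda$ has density $\ge\sigma/\pi$, I would manufacture a nonzero entire function $f$ of exponential type $<\sigma$ that is bounded (indeed small) on $\Lambda$ yet unbounded on $\mathbb R$, contradicting the Cartwright-set hypothesis. Concretely, I would pass to a \emph{maximal} uniformly discrete subset $\Lambda'\subseteq\Lambda$, note $D^-(\Lambda')<\sigma/\pi$, and apply the ``only if'' half of Beurling's sampling theorem: failure of the sampling inequality yields functions of type arbitrarily close to $\sigma$ with unit sup-norm but arbitrarily small traces on $\Lambda'$, which can then be assembled (via a normal-families/summation argument, or via a Blaschke-type generating function whose zeros are placed on $\Lambda$) into a single offending function. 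Alternatively, staying inside the paper's framework, I would encode $\Lambda$ as the spectrum $\sigma(\Theta)=\Lambda$ of a meromorphic inner function $\Theta$ and translate the density deficiency into the triviality of a Toeplitz kernel $\ker T_{S^{a}\overline{\Theta}}$ via the identity $D_\ast(\Lambda)=\frac{1}{2\pi}\inf\{a:\ker T_{S^{a}\overline{\Theta}}=\{0\}\}$, producing the desired entire function as a maximal vector.

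The main obstacle, and the step I expect to require the heaviest machinery, is the mismatch between the \emph{lower uniform} density $D^-$ appearing in the statement and the Beurling--Malliavin density $D_\ast$ that controls Toeplitz kernels: these agree only under a regularity hypothesis such as the Riesz-basis condition of Lemma~\ref{lemma4}, so for a general sequence the passage from ``$D^-$ is small'' to ``a Cartwright-class function decays on $\Lambda$'' must be mediated by the Beurling--Malliavin multiplier theorem, which lets one shave the exponential type below $\sigma$ while preserving the logarithmic-integral (Cartwright) condition. Handling this type adjustment cleanly, together with the extraction of a genuinely maximal uniformly discrete subset attaining the critical density, is the technical heart of the argument; the endpoint equality $D^-=\sigma/\pi$ is precisely what makes strictness of the type ``$<\sigma$'' indispensable throughout.
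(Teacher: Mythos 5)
First, a point of comparison you could not have known: the paper does not prove this statement at all. Theorem~\ref{thmB} is imported verbatim from \cite{natalia2016} purely as background for the remark that follows it, so there is no in-paper argument to measure your proposal against; it must stand on its own. Your reading of ``Cartwright set'' (bounded on $\Lambda$ and of type $<\sigma$ implies bounded on $\mathbb{R}$) is the intended one, and your observation that the strict type inequality $\tau<\sigma$ absorbs the endpoint density $D^-(\Lambda')\ge\sigma/\pi$ into the strict Beurling condition $D^-(\Lambda')>\tau/\pi$ is correct and is indeed how the easy direction is organized in the literature. But both directions, as written, have genuine gaps. In the sufficiency direction the appeal to the ``stable sampling inequality'' is circular: Beurling's theorem asserts $\|f\|_{L^\infty(\mathbb{R})}\le C\sup_{\Lambda'}|f|$ \emph{for $f$ already in the Bernstein space $B_\tau$}, i.e.\ for functions known a priori to be bounded on $\mathbb{R}$ --- which is exactly the conclusion you are trying to reach. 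Upgrading ``bounded on $\Lambda'$'' to ``bounded on $\mathbb{R}$'' for a function not assumed to lie in $B_\tau$ requires a separate argument (weak limits of translates and normal families in Beurling's style, or a Duffin--Schaeffer-type local estimate), and that bridge is the actual content of this direction, not a corollary of the sampling inequality.

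The necessity direction has two concrete problems. First, a \emph{maximal} uniformly discrete subset (maximal under inclusion) does not maximize $D^-$: since the separation constant of admissible subsets may shrink, the supremum of $D^-(\Lambda')$ over u.d.\ subsets $\Lambda'\subset\Lambda$ can equal $\sigma/\pi$ without being attained, and your contrapositive hypothesis only says every u.d.\ subset has density strictly below $\sigma/\pi$; the non-attainment endpoint is precisely where the ``iff with $\ge$'' is delicate, and your sketch does not address it. Second, the counterexample function must be bounded on \emph{all} of $\Lambda$, including arbitrarily tight clusters of points, whereas Beurling's extremal functions are only controlled on the separated skeleton $\Lambda'$; a normal-families assembly gives no smallness at cluster points, so one effectively needs zeros distributed through $\Lambda$ while keeping the type $<\sigma$ and unboundedness on $\mathbb{R}$ --- this construction is the technical heart and is absent. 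Finally, the proposed Toeplitz-kernel detour conflates densities: the identity $D_*(\Lambda)=\frac{1}{2\pi}\inf\{a:\ker T_{S^a\overline{\Theta}}=\{0\}\}$ involves the interior Beurling--Malliavin density $D_*$, which in general differs from the lower uniform density $D^-$ appearing in the statement (they coincide only under extra regularity, e.g.\ the Riesz-basis hypothesis of Lemma~\ref{lemma4}), so triviality of $\ker T_{S^a\overline{\Theta}}$ cannot be translated into the $D^-$ condition without precisely the comparison you defer to ``the BM multiplier theorem.'' As it stands, the proposal is a reasonable roadmap toward the argument in \cite{natalia2016}, but not a proof.
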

Based on the above results, we make the following observation, which sheds light on the underlying relationship between Toeplitz kernel multipliers and classical concepts such as P\'olya sequences and Cartwright  sets. This insight not only reinforces the analytical framework built by earlier works but also suggests new directions for investigating the structure of multiplier spaces in relation to entire function theory.
\begin{remark}
\begin{itemize}
	
\smallskip	
	
\item[(a)] The space of multipliers can be related to the interior Beurling-Malliavin density. By applying Theorem \ref{thmA} \cite{Mitkovski2010}, we observe that the set \( \Lambda \), as introduced in Example~\ref{example4}, constitutes a P\'olya sequence. This implies the existence of a finite, non-zero measure \( \mu \) supported on \( \Lambda \) such that the Fourier transform of \( \mu \) vanishes on a non-trivial interval.

\item[(b)] Building on the results in Lemma \ref{lemma4} \cite{Mitkovski2011}, and Theorem \ref{thmB} \cite{natalia2016} consider \( g, h \in L^\infty(\mathbb{R}) \), as defined in Example~\ref{example4}, where \( \mathcal{M}_2^+(\ker T_g, \ker T_h) \neq \{0\} \) and \( \Lambda = \{\lambda_n\}_{n \in \mathbb{Z}} \) is a uniformly discrete set. If the sequence \( \{ e^{i \lambda_n t} \} \) forms a Riesz basis in \( L^2(0, D) \), then \( \Lambda \) is a Cartwright set for entire functions of exponential type less than or equal to \( \frac{b-a}{2} \).
\end{itemize}
\end{remark}
\section*{Acknowledgement}

We would like to thank Dr. A. K. Bhardwaj for his valuable suggestions and insightful remarks.

\bibliographystyle{plain}
	
\end{document}